%% file: Boundary_rectifiability.tex
\documentclass[reqno,11pt]{amsart} 

\usepackage{color}
\usepackage{xcolor}

\usepackage{ifpdf}
\ifpdf 
\usepackage{graphicx,import}   % to include graphics
\usepackage[pdftex,     % sets up hyperref to use pdftex driver
plainpages=false,   % allows page i and 1 to exist in the same document
breaklinks=true,    % link texts can be broken at the end of line
colorlinks=true,
linkcolor=blue,
 citecolor=magenta,
pdftitle={Boundary rectifiability },
pdfauthor={Giacomo Del Nin}
]{hyperref} 
\else 
\usepackage{graphicx}       % to include graphics
\usepackage{hyperref}       % to simplify the use of \href
\fi 

\usepackage{amsfonts,amsmath}	
\usepackage{amssymb}
\usepackage{verbatim}
\usepackage{amsopn}
\usepackage[english]{babel}
\usepackage{amsthm}
\usepackage{enumerate}
\usepackage{mathrsfs}	
\usepackage{mathtools}
\usepackage{harpoon}
\usepackage{esint}
\usepackage{todonotes}

\usepackage{stmaryrd}
\usepackage{bm}
\usepackage{bbm}
\usepackage{caption}
\usepackage{subcaption}
\usepackage{nicefrac}
\usepackage[a4paper,left=35mm,right=35mm,top=40mm,bottom=40mm,marginpar=25mm]{geometry}

\newcommand{\Dcal}{\mathcal{D}}

\newcommand{\Lcal}{\mathcal{L}}

\newcommand{\Tcal}{\mathcal{T}}

\newcommand{\Msf}{\mathsf{M}}

\newcommand{\altnorm}[1]{{\left\vert\kern-0.25ex\left\vert\kern-0.25ex\left\vert #1 \right\vert\kern-0.25ex\right\vert\kern-0.25ex\right\vert}}
\newcommand{\eps}{\varepsilon}

\newcommand{\ibf}{\mathbf{i}}

\newcommand{\Tan}{\mathrm{Tan}}

\theoremstyle{definition} \newtheorem{definition}{Definition}[section]
\theoremstyle{definition} \newtheorem{remark}[definition]{Remark}
\theoremstyle{plain} \newtheorem{lemma}[definition]{Lemma}
\theoremstyle{plain} \newtheorem{proposition}[definition]{Proposition}
\theoremstyle{plain} \newtheorem{theorem}[definition]{Theorem}
\theoremstyle{plain} \newtheorem{corollary}[definition]{Corollary}
\theoremstyle{definition} 
\theoremstyle{plain} 
\theoremstyle{definition} 
\theoremstyle{plain} 
\theoremstyle{plain}

\DeclareMathOperator{\dist}{dist}
\DeclareMathOperator{\supp}{supp}

\DeclareMathOperator{\Wedge}{{\textstyle\bigwedge}}

\newcommand{\R}{\mathbb{R}}

\newcommand{\N}{\mathbb{N}}
\newcommand{\Z}{\mathbb{Z}}

\newcommand{\loc}{\text{\rm loc}}
\newcommand{\D}{\mathcal{D}}

\newcommand{\M}{\mathsf{M}}

\newcommand{\1}{\mathbbm 1}

\newcounter{counter}

\newcommand{\curr}[1]{[\![#1]\!]}

\renewcommand{\S}{\mathbb{S}}
\newcommand{\T}{T}

\newcommand{\weaksto}{\overset{*}{\rightharpoonup}}
\renewcommand{\H}{\mathcal H}

\theoremstyle{plain} \newtheorem*{theorem*}{Theorem}
\theoremstyle{plain} 
\theoremstyle{plain} \newtheorem*{mthm*}{Main Theorem}
\theoremstyle{plain} \newtheorem*{conjecture*}{Conjecture}
\theoremstyle{plain} 
\theoremstyle{plain} \newtheorem*{problem*}{Problem}

\numberwithin{equation}{section}

\usepackage{color}
\usepackage{graphicx}
\usepackage{tikz}
\usepackage{framed}
\definecolor{shadecolor}{rgb}{0.94, 0.97, 1.0}

\makeatletter
\def\l@subsection{\@tocline{2}{0pt}{2.5pc}{5pc}{}}
\makeatother

\usepackage{pict2e}
\makeatletter
\DeclareRobustCommand{\intprod}{%
  \mathbin{\mathpalette\int@prod{(0.1,0)(0.9,0)(0.9,0.8)}}}
\DeclareRobustCommand{\restrict}{%
  \mathbin{\mathpalette\int@prod{(0.1,0.8)(0.1,0)(0.9,0)}}}	
\newcommand{\int@prod}[2]{%
  \begingroup
  \sbox\z@{$\m@th#1+$}%
  \setlength\unitlength{\wd\z@}%
  \begin{picture}(1,1)
  \roundcap
  \polyline#2
  \end{picture}%
  \endgroup
}
\makeatother

\title[Boundary rectifiability and compactness via $BV$]{Boundary rectifiability and compactness of integral currents
via $BV$ functions}
\author{Giacomo Del Nin}
\address[G.\ Del Nin]{Max Planck Institute for Mathematics in the Sciences, Inselstrasse 22, 04103 Leipzig, Germany}
\email{giacomo.delnin@mis.mpg.de}

\begin{document}
\begin{abstract}
    We present a new proof that an integer rectifiable current with finite mass, and whose boundary has also finite mass, is integral. We deduce the result from De Giorgi's structure theorem for integer-valued $BV$ functions and a cylindrical projection argument. As a consequence, we also give a new proof of the compactness of integral currents that is ultimately based on the $BV$ theory.
\end{abstract}

\maketitle

\setcounter{tocdepth}{1}
\tableofcontents

\section{Introduction}
The space of currents has been widely used as a weak setting to study problems concerning oriented surfaces since the seminal paper by Federer and Fleming \cite{Federer-Fleming}, who expanded the theory of currents introduced by de Rham \cite{DeRham} in a variational direction. Of particular importance for the space of \textit{integral} currents are the compactness theorem and the boundary rectifiability theorem.

The goal of this paper is giving a new proof of the following:

\begin{theorem}[Boundary rectifiability]\label{thm:boundary_rectifiability}
    Let $S$ be an integer rectifiable current in $\R^n$ with finite mass and whose boundary $\partial S$ has finite mass. Then $S$ is integral, i.e., $\partial S$ is also integer rectifiable.
\end{theorem}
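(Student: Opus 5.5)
We argue by induction on the dimension $k$ of $S$, using the structure theorem for integer-valued $BV$ functions (De Giorgi) as the base mechanism. If $k=n$, then $S=\curr{\R^n}\restrict u$ for some $u\in L^1(\R^n;\Z)$. The condition $\M(\partial S)<\infty$ says exactly that $Du$ is a finite measure, so $u\in BV(\R^n;\Z)$. De Giorgi's theorem then decomposes $u$ through its level sets $\{u\ge t\}$, $t\in\Z$, which are sets of finite perimeter. Their reduced boundaries are $(n-1)$-rectifiable, and $Du=\sum_t D\1_{\{u\ge t\}}$ with integer multiplicities. Hence $\partial S$ is integer rectifiable.

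For $k<n$, we first treat the case where $S$ lies over a $k$-plane. Fix an orthonormal frame, let $\pi:\R^n\to\R^k$ be the projection, and consider the cylindrical (graphical) structure. We will not reduce to a single projection, since $S$ is not a graph. Instead we use the following scheme.

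\begin{enumerate}[(1)]
\item By the slicing theory and the Federer--Fleming characterization, it suffices to show that $\partial S$ is rectifiable. Equivalently, we show that for almost every projection $p$ onto a $(k-1)$-plane, $p_\#(\partial S\restrict A)$ is integer rectifiable for compact $A$; this suffices via the integral-geometric characterization of rectifiable flat chains. We use the cleaner version: a finite-mass flat chain $T$ of dimension $k-1$ is rectifiable iff $p_\#(T\restrict B)$ is rectifiable for all balls $B$ and a.e.\ $p$. This is White's rectifiability criterion. Since it may be too heavy, we instead aim at the more hands-on route below.
\item Project onto $k$-planes $V$. For a.e.\ $V$, $\pi_{V\#}S=\curr{V}\restrict u_V$ with $u_V\in L^1(V;\Z)$, because $S$ is integer rectifiable and push-forwards of integer rectifiable currents to top dimension are given by integer densities counting preimages with orientation. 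Moreover $\partial(\pi_{V\#}S)=\pi_{V\#}\partial S$ has finite mass, so $u_V\in BV(V;\Z)$. By the base case, $\pi_{V\#}\partial S$ is integer rectifiable.
\item Localize the argument using cylinders $C=\pi_V^{-1}(B)$ around points where $S$ has an approximate tangent close to $V$. Cover $S$ by countably many Lipschitz graphs $\Gamma_i$ over such planes. Use the slicing estimate, which gives $\M(\partial(S\restrict C))\le \M(\partial S)+\M\langle S,\dist,r\rangle$ for a.e.\ radius $r$, to keep all boundaries of finite mass. Then push $\partial S$ restricted near $\Gamma_i$ to $V$ and pull back along the Lipschitz graph map.
\end{enumerate}

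The main obstacle is step (3). The part of $\partial S$ lying far from the support structure of $S$, or where different sheets of $S$ project onto the same region of $V$, cannot be recovered from the projection directly, because cancellations in $\pi_{V\#}$ may hide boundary mass. We handle this by showing that $\|\partial S\|$ is concentrated on a set that is $\sigma$-finite for $\H^{k-1}$ and has positive lower density. This follows from a density bound for $\|\partial S\|$ obtained by comparing with projections: for $\|\partial S\|$-a.e.\ point there are many planes $V$ for which the projection sees nonzero boundary locally. We then apply a Besicovitch–Federer-type argument. If a purely unrectifiable part carried $\|\partial S\|$-mass, its projection would be null for a.e.\ $V$. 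That contradicts the fact that $\pi_{V\#}(\partial S\restrict\cdot)$ is nonzero and, by step (2), lies on a rectifiable set with integer density. We expect this rectifiability-versus-projection step to be the delicate part. The integrality of the multiplicities then follows by comparing densities with those of the $BV$ jump sets.
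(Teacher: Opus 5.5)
\textbf{There is a genuine gap: the density lower bound everything rests on is asserted, not proved.}

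Step (1) is set aside. Step (3) cannot work as written: $\partial S$ is not carried by the graphs $\Gamma_i$ covering $S$, and restricting $S$ to a cylinder $\pi_V^{-1}(B)$ does not isolate one sheet, so there is nothing to pull back along a graph map. What remains is your last paragraph. There everything rests on the claim that $\|\partial S\|$ is carried by an $\H^{k-1}$-$\sigma$-finite set of positive lower density. This is not a technicality. A priori $\|\partial S\|$ could be diffuse, carried by a set of dimension larger than $k-1$; then Besicovitch--Federer does not apply, and projections see plenty of mass with no contradiction.

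Your justification, that many projections ``see nonzero boundary locally'', does not give this bound. At a Lebesgue point $x$ of the orientation, testing with a constant form gives $\M(\pi_{V\#}((\partial S)\restrict B_r(x)))\gtrsim\|\partial S\|(B_r(x))$. That inequality goes the wrong way: it yields no bound $\|\partial S\|(B_r(x))\gtrsim r^{k-1}$, and a nonzero integer rectifiable current can have arbitrarily small mass.

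Nor can you run a De Giorgi-type density estimate on the $BV$ functions of step (2). Orthogonal projections are not local: $\pi_V^{-1}$ of a disc is an infinite cylinder. If you localize first, then for a.e.\ $r$ you get $\pi_{V\#}((\partial S)\restrict B_r(x))=\partial\pi_{V\#}(S\restrict B_r(x))-\pi_{V\#}\langle S,|\cdot-x|,r\rangle$. The projected slice lies inside the disc $\pi_V(B_r(x))$ and can cancel boundary mass. So there is no single $\Z$-valued $BV$ function at whose point $\pi_V(x)$ the relevant blowup could be taken. Relatedly, your base case only gives global rectifiability of $|Du_V|$. What is needed is a pointwise statement, a flat blowup with integer density, at the image of a prescribed point of $\partial S$.

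Two smaller errors:
\begin{itemize}
\item Besicovitch--Federer makes purely unrectifiable sets null under a.e.\ projection onto $(k-1)$-planes, not onto the $k$-planes of step (2).
\item Your closing integrality claim via the jump sets of $u_V$ has the same cancellation problem, since orthogonal projection superimposes distant sheets.
\end{itemize}

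The paper supplies exactly this missing ingredient, and avoids Besicovitch--Federer entirely, by using the cylindrical projection $f_{x_0,V}$ of \eqref{eq:projection_definition} instead of orthogonal projections. It maps into a space of dimension $\dim S$, with $V$ of dimension $\dim\partial S$. Since $|f_{x_0,V}(x)|=|x-x_0|$, one has $f_{x_0,V}^{-1}(B_r(0))=B_r(x_0)$, so locality is preserved. Then $f_*S$ is represented by a single $u\in BV(\R^{\dim S},\Z)$ supported in a half-space, with $\|f_*\partial S\|=|Du|$.

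\begin{itemize}
\item $V$ is the span of the best simple approximation of $\tau_0$ (Lemma \ref{lemma:decomposition}). This makes $df|_x[\tau_0]$ independent of $x$, so approximate continuity of the orientation at $x_0$ becomes approximate continuity of the polar of $Du$ at $0$ (Lemma \ref{lemma:preservation_Lebesgue}).
\item The $\Z$-valued De Giorgi theorem (Theorem \ref{thm:blowup}) then gives a flat blowup with integer density $m\ge 1$. Lemmas \ref{lemma:upper_densities_I} and \ref{lemma:positive_lower_density} there provide exactly the lower density bound you lack.
\item $f_{x_0,V}$ is $1$-Lipschitz, and preimages under it of cones about $f(V)$ are cones about $V$. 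So the tangent plane, the positive lower density and the integer density transfer back to $x_0$ (Corollary \ref{cor:cones_under_projection} and \eqref{eq:positive_lower_density}).
\end{itemize}

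If you want to keep your route, you need an independent proof that $\Theta^{*,k-1}(\|\partial S\|,x)>0$ for $\|\partial S\|$-a.e.\ $x$. With that in hand, and the corrections above, your projection argument becomes the Federer-structure-theorem proof that the paper explicitly sets out to avoid.
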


As a consequence of this result we will also give a new proof of the following:
\begin{theorem}[Compactness of integral currents]\label{thm:compactness}
    Let $T_j$ be a sequence of integral $k$-currents in $\R^n$, such that 
    \[
    \sup_j \M(T_j)+\M(\partial T_j)<\infty
    \]
    Then, up to subsequence, $T_j\weaksto T$ for some integral $k$-current $T$.
\end{theorem}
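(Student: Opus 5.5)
The plan is to obtain the limit from weak-$*$ compactness of normal currents, and to reduce integrality of the limit to the closure of integer rectifiability. Theorem~\ref{thm:boundary_rectifiability} is what makes this reduction possible. By the mass bounds and Banach--Alaoglu (applied to $T_j$ and $\partial T_j$ as functionals on compactly supported smooth forms), a subsequence satisfies $T_j\weaksto T$ and $\partial T_j\weaksto\partial T$. Lower semicontinuity of mass gives $\M(T)+\M(\partial T)<\infty$, so $T$ is normal. If $T$ is shown to be integer rectifiable, Theorem~\ref{thm:boundary_rectifiability} yields that $\partial T$ is integer rectifiable as well, hence $T$ is integral. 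The whole task is therefore: \emph{a weak-$*$ limit of integral currents with equibounded mass and boundary mass is integer rectifiable.}

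I would prove this by induction on $k$, using slicing. For $k=0$ the $T_j$ are finite sums of integer-weighted Dirac masses with bounded total weight. After a localization to a large ball using a cutoff (harmless thanks to the boundary bound), the limit measure is again an integer combination of Dirac masses: at most $\sup_j\M(T_j)$ atoms survive, and the weights converge in $\Z$. For $k=n$, $T_j=\curr{\R^n}\restrict u_j$ with $u_j$ integer-valued and $|Du_j|(\R^n)=\M(\partial T_j)$ bounded. The $BV$ compactness theorem gives $u_j\to u$ in $L^1_\loc$ along a subsequence, and $u$ is integer-valued a.e.; this is the point where the $BV$ theory enters. For intermediate $k$, I would slice by a Lipschitz map $f$, say a coordinate function or a projection $p$ onto a coordinate $k$-plane. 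By the slicing inequality $\int \M(\langle T_j,f,t\rangle)\,dt\le \mathrm{Lip}(f)\M(T_j)$ and Fatou's lemma, for a.e.\ $t$ there is a subsequence along which the slices $\langle T_j,f,t\rangle$ are integral $(k-1)$-currents (respectively $0$-currents) with bounded mass and boundary mass. Their limits are then integer rectifiable by the inductive hypothesis. Convergence of slices of the $T_j$ to slices of $T$ for a.e.\ $t$ follows from the standard flat-norm estimate for slices, since $T_j\to T$ in flat norm locally, using the mass bounds.

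The remaining and main step is a rectifiability criterion. I must show that a normal $k$-current $T$ whose $0$-dimensional slices $\langle T,p,y\rangle$, for a.e.\ $y$ and every coordinate projection $p$ (or a dense family of projections), are integer-multiplicity $0$-currents, is itself integer rectifiable. I expect this to be the hardest part. The first attempt would be the cylindrical projection argument announced in the abstract: localize $T$ in a thin cylinder over a $k$-plane, and push forward by the projection $p$. This yields an $n=k$ top-dimensional normal current $p_\#(T\restrict C)=\curr{\R^k}\restrict v$, where $v\in BV$ because $\partial p_\#=p_\#\partial$ and $\M(\partial(T\restrict C))$ is controlled for a.e.\ cylinder. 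Since $v(y)=\langle T,p,y\rangle(\1_C)$ is integer-valued, De Giorgi's structure theorem applies to $v$. Combining this over a countable dense family of planes and shrinking cylinders would then force the measure $\|T\|$ to have positive lower $k$-density and integer multiplicity $\|T\|$-a.e. This is the same mechanism used to prove Theorem~\ref{thm:boundary_rectifiability}, so I would reuse that machinery, with the integrality of slices playing the role there played by integer rectifiability.
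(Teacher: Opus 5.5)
\textbf{There is a genuine gap: the key rectifiability step is unproved, and the mechanism you sketch for it does not work.}

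\textbf{What is correct.} Your reduction is right and coincides with the last step of the paper: once the normal limit $T$ is known to be integer rectifiable, Theorem~\ref{thm:boundary_rectifiability} makes it integral. The slicing step is also essentially fine. You only need the $0$-dimensional slices $\langle T,p,y\rangle$. These have integer multiplicity by flat convergence, Fatou and the case $k=0$, so the induction through $(k-1)$-dimensional slices plays no role.

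\textbf{Where it fails.} The gap is the step you yourself single out. The criterion ``normal current with integer-multiplicity $0$-slices $\Rightarrow$ integer rectifiable'' is the rectifiable slices theorem of White and Ambrosio--Kirchheim, and your mechanism does not prove it.
\begin{enumerate}
\item Applying De Giorgi's theorem to $v$, where $p_\#(T\restrict C)=\curr{\R^k}\restrict v$, only gives information on $Dv=p_\#\partial(T\restrict C)$. That is, it describes the projection of $\partial T\restrict C$ and of the cut of $T$ along $\partial C$.
\item The value $v(y)$ itself is just the signed number of sheets of $T$ inside $C$ over $y$. It does not see where these sheets sit in $C\cap p^{-1}(y)$, so nothing about approximate tangent planes of $\|T\|$ comes out.
\item Integrality of $v$ together with the isoperimetric inequality can give a positive lower density (White's slicing argument). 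But positive lower density and integer projected counts do not by themselves produce tangent planes. One still needs a rectifiability criterion: Federer's structure theorem, Marstrand--Mattila or Preiss, or the rectifiable slices theorem itself. Avoiding exactly these is the point of the paper.
\end{enumerate}
The analogy with Theorem~\ref{thm:boundary_rectifiability} breaks down. There the current under study is $\partial S$ with $S$ integer rectifiable. After the cylindrical projection it becomes $Du$ for some $u\in BV(\R^{k+1},\Z)$, and the geometry is encoded in the jump set of $u$. Your $T$ is not known to be the boundary of anything.

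\textbf{What the paper does instead.} The paper supplies this missing structure.
\begin{enumerate}
\item It makes the $T_j$ boundaryless by passing to $\widetilde T_j=(\pi_v)_*\partial(T_j\times\curr{0,1})$.
\item It fills these by the cone isoperimetric inequality with integral $(k+1)$-currents $S_j$ of equibounded mass.
\item It applies the statement in dimension $k+1$, by induction on the codimension with base case the compactness of $\Z$-valued $BV$ functions. This gives an integral $S$ with $\partial S=(\pi_v)_*(\partial T\times\curr{0,1})+T-(\Tcal_v)_*T$.
\item Integer rectifiability of $T$ is then read off from that of $\partial S$. Taking $|v|$ large separates $T$ from its translate. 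Letting $v$ range over a dense set of directions excludes cancellations with the piece ruled in direction $v$.
\end{enumerate}

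If you are willing to cite the rectifiable slices theorem, your argument closes. It then becomes the classical White/Jerrard/Ambrosio--Kirchheim proof rather than one resting only on $BV$ theory.
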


One existing proof of Theorem \ref{thm:boundary_rectifiability} passes through the polyhedral deformation theorem and the compactness theorem for integral currents (see \cite{Simon}). White's proof instead relies, via slicing, on the compactness of integral currents, which is proven simultaneously in a mutual induction on the dimension \cite{White}. Fleming gives a proof that is based on Federer's structure theorem characterizing purely unrectifiable sets through projections \cite{Fleming}.

On the other hand, the original proof of Theorem \ref{thm:compactness} by Federer and Fleming \cite{Federer-Fleming} is also based on Federer's structure theorem. Later proofs rely on the ``rectifiable slices theorem'' \cite{Jerrard,White-flat-chains}, see also \cite[Section~8]{Ambrosio-Kirchheim} for a version in metric spaces. Another one is based on a density lemma that ultimately also relies on Federer's structure theorem \cite{Simon}. 
Solomon instead provides a proof that is based on Almgren's multi-valued functions \cite{Solomon}.

Our goal is to give an independent and direct proof of the boundary rectifiability theorem (and as a consequence, also of the compactness theorem) that does not rely on any of these results, but rather relies only on the theory of $\mathbb{Z}$-valued $BV$ functions, through the use of De Giorgi's structure theorem and a projection argument. This is made possible by the fact that, after the projection, a top-dimensional normal current can be identified with a $BV$ function. In particular, we do not employ Federer's structure theorem, the polyhedral deformation lemma, or the rectifiabile slices criterion. 
\begin{figure}
    \centering
    \def\svgwidth{0.4\columnwidth}
    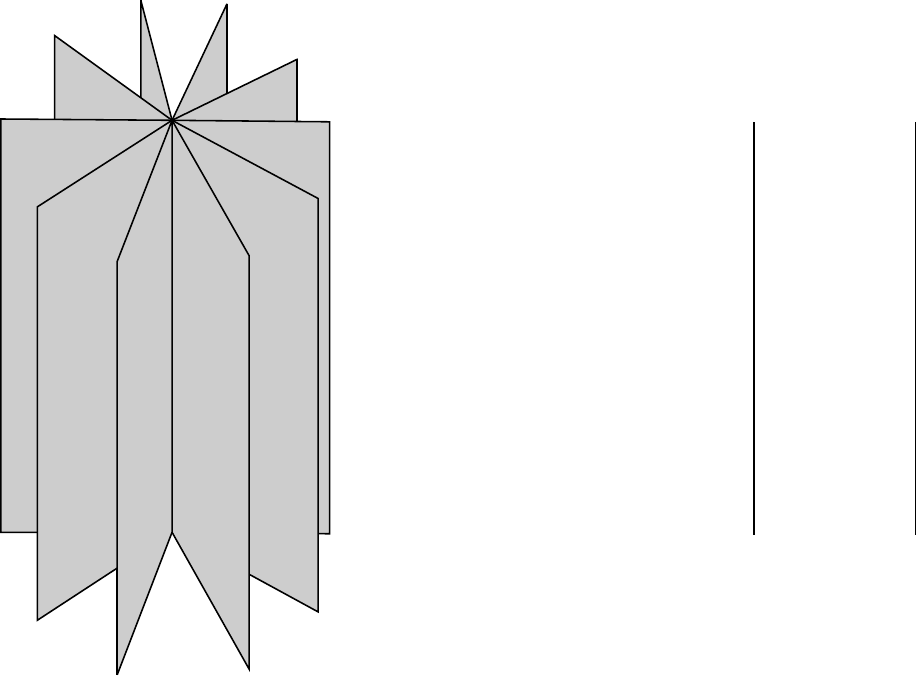

    \caption{ \small The cylindrical projection $f_{x_0,V}$, in the special case where $k=1$, $n=3$ and $V$ is the span of the $1$-vector $\tau_0$. In this case $f_{x_0,V}$ identifies all the half-planes originating at $V$. This is similar to closing an open book and identifying all pages.}
    \label{fig:projection}
\end{figure}
The key novelty of this strategy is using, instead of orthogonal projections onto planes, a ``cylindrical'' projection that is akin to closing the pages of an open book, identifying all the pages (see Figure \ref{fig:projection}). 

\subsection{Outline of the argument}
The rough outline of the boundary rectifiability proof is the following (we refer to Section \ref{sec:preliminaries} for the notation):

\begin{enumerate}
    \item We first identify a ``reduced boundary'' for the $(k+1)$-current $S$, made of all points $x_0\in \supp \|\partial S\|$ of approximate continuity of the orientation $\vec{\partial S}$, for which additionally $\Theta^{*,k}(\|\partial S\|,x_0)<\infty$. By standard results, $\|\partial S\|$-a.e. point satisfies both properties.
    \item For each such $x_0$, we identify a suitable linear $k$-plane $V$ adapted to the local orientation of $\partial S$ (see the next section for the explanation of this choice; morally it should be the tangent space to $\partial S$ at $x_0$) 
    and we consider the cylindrical projection $f_{x_0,V}:\R^n\to\R^{k+1}$ given by
\begin{equation}\label{eq:projection_definition}
    f_{x_0,V}(x):=\big((x-x_0)\cdot v_1,\,\ldots\,,(x-x_0)\cdot v_k, \dist(x-x_0,V)\big),
\end{equation}
where $v_1,\ldots,v_k$ is an orthonormal basis of $V$.
\item We then consider the pushforward $(f_{x_0,V})_*S$: it is an integer rectifiable $(k+1)$-current in $\R^{k+1}$, hence can be identified with a function $u\in BV(\R^{k+1},\Z)$, and its boundary can be identified with $Du$.
If $x_0$ is chosen as in Point (1) it turns out that, thanks to our choice of $V$, the approximate continuity property of the polar is preserved, hence the origin belongs to the ``reduced boundary'' of $u$. A modification of De Giorgi's blowup argument implies the existence of an approximate tangent $k$-plane for $|Du|$ at the origin.
\item By pulling back this information to $\R^n$, we discover that also the current $\partial S$ admits an approximate tangent $k$-plane at $x_0$. Since this holds for $\|\partial S\|$-a.e. point, we deduce the rectifiability of $\partial S$. The integer multiplicity then follows.
\end{enumerate}

We include a simple proof of the case $k=0$ in Section \ref{sec:1-dim_proof}, in order to show the argument in a simplified setting.

Finally, we also deduce compactness of integral currents from the boundary rectifiability.
Instead of arguing by induction on the dimension $k$ (as done for instance in \cite{White}), we go in the other direction, 
employing an induction on the \textit{codimension}, the base case being the top-dimensional one, i.e., compactness of $\Z$-valued $BV$ functions. This provides a proof of the compactness theorem that, except for some general lemmas about currents, is ultimately based only on the theory of $\Z$-valued $BV$ functions.

\subsection{The role of the cylindrical projection}
The new ingredient in the proof of the boundary rectifiability is the use of the cylindrical projection $f_{x_0,V}$ in \eqref{eq:projection_definition}. The main difference with respect to orthogonal projections is that the latter allow for cancellations between far away pieces, thus possibly losing local information about the geometry of $\partial S$. Instead, the projection $f_{x_0,V}$ is capable of preserving local information: observe for instance that it is a proper map, and that the preimage of $B_r(0)$ under $f_{x_0,V}$ is $B_r(x_0)$, while for orthogonal projections it is a whole infinite cylinder. Additionally, and crucially, the preimage of a $k$-dimensional cone about the hyperplane $\{x_{k+1}=0\}$ remains a $k$-dimensional cone, hence allowing the transfer of tangent spaces from $\R^{k+1}$ to $\R^n$.

The heart of the argument is the choice of the appropriate $k$-space $V$ to employ in the projection (see Lemma \ref{lemma:decomposition}).  If the orienting multi-vector $\vec{\partial S}(x_0)$ is simple then $V$ can be taken to be its span. A posteriori $\partial S$ will be rectifiable and thus its orienting vector is simple at $\|\partial S\|$-a.e. point, but we do not know it at this point. Hence we do the next best thing: we consider the best approximation of $\vec{\partial S}$ with a \textit{simple} $k$-vector, and consider the span of the latter. This choice turns out to preserve enough directional information about the boundary orientation to transfer approximate continuity of the polar into the $BV$ setting, allowing for the application of De Giorgi's theorem. The reason is that this projection only cares about the component of $\vec{\partial S}$ parallel to $V$, forgetting about any other ``orthogonal'' components.
%This preservation of approximate continuity is arguably the most delicate part, and most of Section \ref{sec:boundary_rectifiability} is devoted to the proof of this point. 
In the case when we know a priori that $\vec{\partial S}$ is simple (for instance, if $k=1$, or by some other means) the preservation of approximate continuity follows in a straightforward way, hence the argument could be simplified further.

In conclusion, the argument presented here suggests that certain rectifiability questions for currents may systematically reduce to codimension-one rigidity statements after suitable nonlinear projections.

\subsection{De Giorgi's structure theorem}
While De Giorgi's structure theorem for finite perimeter sets is a classical result that can be found in many textbooks on the topic, we could not find any reference to an analogue result for $\mathbb{Z}$-valued $BV$ functions. In particular, we do not just need the rectifiability of $|Du|$ (which follows directly by the coarea formula), but we really need the pointwise statement that the blowup of $|Du|$ at precisely the points we are looking at is flat (see Theorem \ref{thm:blowup}). We thus decided to provide a complete proof of this. 
The main additional difficulty with respect to finite perimeter sets is establishing uniqueness of the blowup  (see Proposition \ref{prop:uniqueness}). In order not to interrupt the flow of the main result, and since we will use De Giorgi's theorem only as a black box, we decided to postpone the proof to the appendix.

\subsection*{Acknowledgements.} I am greatly indebted to Andrea Marchese and Paolo Bonicatto, since the inception of this paper was born out of conversations with them. I am also thankful to Zhuolin Li for many helpful comments on a first draft of the paper.

\subsection*{Addendum}
Upon the completion of this paper I came to know from Stefan Luckhaus that he and Emanuele Spadaro are also working on a method to reduce questions about rectifiability for currents to questions about $BV$ functions. However, up to my understanding, their methods seem to be different.

\section{Preliminaries on currents}\label{sec:preliminaries}

We briefly recall some fundamental notions about currents, mainly to fix the notation. We refer the reader to \cite{KP} and \cite{Federer} for an extensive treatment of currents.

We denote by $\Dcal^k(\R^n)$ the space of compactly supported, smooth differential $k$-forms in $\R^n$.
A $k$-current $T$ in $\R^n$ is a linear, continuous functional on $\Dcal^k(\R^n)$, and we denote this action by $\langle T,\omega\rangle$, $\omega\in \Dcal^k(\R^n)$. The boundary $\partial T$ of $T$ is the $(k-1)$-current defined by $\langle \partial T,\omega\rangle:=\langle T, d\omega\rangle$ for every $\omega\in \Dcal^{k-1}(\R^n)$. A $k$-current $T$ has finite mass if
\[
\M(T):=\sup\{\langle T,\omega\rangle : \, \omega\in\Dcal^k(\R^n), \, |\omega(x)|\le 1\,\forall x\}<\infty.
\]
A $k$-current $T$ is normal if $\M(T)+\M(\partial T)<\infty$. 

If $T$ has finite mass then it can be represented as $T=\tau\mu$, where $\mu$ is a non-negative finite measure and where $\tau\in L^1_\mu(\R^n;\bigwedge_k\R^n)$  is a unit $k$-vector field. In this case we will also denote $\mu$ by $\|T\|$, the mass measure of $T$. Here we denote by $\bigwedge_k\R^n$ the space of $k$-vectors in $\R^n$. A $k$-vector $v\in\bigwedge_k\R^n$ is simple if it can be represented as $v=v_1\wedge \ldots \wedge v_k$ for some vectors $v_1,\ldots, v_k$. We will use $\ibf$ to denote a multi-index: $\ibf=(i_1,\ldots,i_k)$, $i_1<\ldots<i_k$. The space of all $k$-indices over $n$ elements will be denoted by $I(n,k)$. %If both $\Msf(T)<\infty $ and $\Msf(\partial T)<\infty$ then we say that $T$ is normal.

A set $E\subseteq \R^n$ is $\H^k$-rectifiable if it is contained in the union of countably many Lipschitz images of $\R^k$, up to a $\H^k$-negligible set. A $k$-current $T$ is rectifiable if it can be represented as $T=f \tau \H^k\restrict E$, for some $\H^k$-rectifiable set $E$, some multiplicity $f\in L^1_{\H^k}(E)$ and some unit, simple $k$-vector field $\tau(x)=v_1(x)\wedge \ldots \wedge v_k(x)$ with $\{v_i(x)\}_{i=1,\ldots,k}$ spanning the approximate tangent space $\Tan(E,x)$ of $E$ at $x$ for $\H^k$-a.e. $x\in E$.

A $k$-current $T$ is integer rectifiable if in addition to being rectifiable the multiplicity $f$ is integer-valued. $T$ is called integral if it is normal, integer rectifiable, and if also its boundary $\partial T$ is integer rectifiable.

\subsection{A pushforward formula}

The classical formula for the pushforward of a $k$-current with finite mass $T=\tau\mu$ in $\R^n$ under a proper $C^1$ map $f:\R^n\to\R^m$ reads
\[
\langle f_*T,\omega\rangle:=\langle T, f^*\omega\rangle=\int \langle \omega(f(x)), df|_x[\tau(x)]\rangle\, d\mu(x).
\]
Here, for a linear map $L$, we denote by $L[\tau]$ the pushforward of the $k$-vector $\tau$, defined by
\[
L[v_1\wedge\ldots\wedge v_k]:=L[v_1]\wedge\ldots \wedge L[v_k]
\]
on simple $k$-vectors and extended by multi-linearity. In the following we will need an analogous formula for the pushforward under the projection $f=f_{x_0,V}$, which is only Lipschitz. We could appeal to the results in \cite{Alberti-Marchese}, which are much more general (see e.g. \cite[Proposition~5.17]{Alberti-Marchese}), but in our specific case we can provide a sketch of the proof.

\begin{lemma}[Pushforward under cylindrical projection]\label{lemma:pushforward}
    Let $T=\tau\mu$ be a normal $k$-current in $\R^n$, and $f=f_{x_0,V}:\R^n\to \R^{k+1}$ the cylindrical projection defined in \eqref{eq:projection_definition}. Then 
    \[
    \langle f_*T,\omega\rangle=\int \langle \omega(f(x)), df|_x[\tau(x)]\rangle\, d\mu(x),
    \]
    where $df|_x[\tau(x)]$ is defined classically for $x\in\R^n\setminus (x_0+V)$, while for $x\in x_0+V$ 
    \[
    df|_x[\tau(x)]:=(\pi_V(\tau^V(x)),0),    \]
    where $\tau^V(x)$ denotes the component of $\tau$ parallel to $V$, and $\pi_V$ the orthogonal projection onto $V$ (identified with $\R^{k}$).
\end{lemma}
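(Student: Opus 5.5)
The idea is to approximate $f=f_{x_0,V}$ by smooth proper maps $f_\eps$ that agree with $f$ away from the axis $x_0+V$, and then pass to the limit in the classical pushforward formula. Without loss of generality take $x_0=0$ and $V=\R^k\times\{0\}$, so that
\[
f(x)=(x',|x''|),\qquad x=(x',x'')\in\R^k\times\R^{n-k}.
\]
For $\eps>0$ we set
\[
f_\eps(x):=(x',\rho_\eps(x'')),\qquad \rho_\eps(x''):=\sqrt{|x''|^2+\eps^2}-\eps .
\]
These maps are smooth, proper, and uniformly Lipschitz with constant $1$. They converge to $f$ uniformly, since $0\le |x''|-\rho_\eps(x'')\le \eps$. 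The classical formula then applies to each $f_\eps$:
\[
\langle (f_\eps)_*T,\omega\rangle=\int\langle\omega(f_\eps(x)),df_\eps|_x[\tau(x)]\rangle\,d\mu(x).
\]

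\emph{Step 1: convergence of the left-hand side.} We show $(f_\eps)_*T\to f_*T$ as currents. Here $f_*T$ is defined for a Lipschitz proper map on a normal current, for instance as the limit of pushforwards under smoothings, or through the homotopy formula. The homotopy formula gives
\[
(f_\eps)_*T-f_*T=\partial H_\eps+H'_\eps,
\]
where $H_\eps$ and $H'_\eps$ are the homotopy currents built from $T$ and $\partial T$ respectively. Their masses are bounded by $\eps\,\M(T)$ and $\eps\,\M(\partial T)$ (the Lipschitz constants are $\le1$ and the homotopy displacement is at most $\eps$). Both tend to zero as $\eps\to0$, and this is exactly where normality of $T$ is used. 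Convergence on the support is harmless because everything is proper and $\omega$ has compact support.

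\emph{Step 2: convergence of the integrand.} Off the axis we have $d\rho_\eps(x'')=\frac{x''}{\sqrt{|x''|^2+\eps^2}}\cdot$, which converges to $\frac{x''}{|x''|}\cdot$. Hence $df_\eps|_x\to df|_x$ pointwise, and so $df_\eps|_x[\tau]\to df|_x[\tau]$ by multilinearity. On the axis $x''=0$ we get $d\rho_\eps(0)=0$, so $df_\eps|_x=(\pi_V,0)$, the map $w\mapsto(\pi_V w,0)$. Pushing forward $\tau$ under this map kills every component of $\tau$ containing a factor orthogonal to $V$. What survives is $\pi_V(\tau^V)$ in the first $k$ slots, with zero last coordinate. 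This matches the definition in the statement for every $\eps$, so the convergence on the axis is trivial.

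The integrands are bounded by $\|\omega\|_\infty$, because $|df_\eps[\tau]|\le \mathrm{Lip}(f_\eps)^k|\tau|\le1$. We also have $\omega(f_\eps(x))\to\omega(f(x))$ by uniform convergence, and $\mu$ is finite. Dominated convergence therefore gives convergence of the right-hand side to the claimed integral.

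The main obstacle is Step 1: making sense of $f_*T$ for the merely Lipschitz $f$ and showing the limit does not depend on the approximation. I would try the homotopy estimate first, since it is exactly what the normality hypothesis buys. One could alternatively simply \emph{define} $f_*T$ as this limit, citing \cite[Proposition~5.17]{Alberti-Marchese} for consistency.
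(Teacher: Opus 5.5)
Your proposal is correct and follows the paper's own sketch. You smooth the distance coordinate as $\sqrt{|x''|^2+\eps^2}$ (your extra $-\eps$ shift is immaterial), apply the classical pushforward formula, and pass to the limit by dominated convergence, noting that $d\rho_\eps$ vanishes on the axis so that the on-axis term is already $(\pi_V(\tau^V),0)$. Your homotopy estimate $(f_\eps)_*T-f_*T=\partial H_\eps+H'_\eps$, with flat norm at most $\eps(\M(T)+\M(\partial T))$, makes explicit the convergence of the left-hand side that the paper leaves implicit, and that is exactly where normality is used.
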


\begin{proof}[Sketch of proof]
    We approximate $f$ with $f_\eps(x):=(\pi_V(x-x_0),\sqrt{\eps^2+\dist(x,x_0+V)^2})$. Applying the standard pushforward formula and passing to the limit thanks to Lebesgue's dominated convergence we reach the conclusion.
\end{proof}

\subsection{Isoperimetric inequality}
We recall the following simple isoperimetric inequality for currents (see \cite[7.4.4]{KP}):

\begin{lemma}[Cone isoperimetric inequality]\label{lemma:isoperimetric_inequality}
    Let $T$ be an integral $k$-current in $\R^n$, $1\le k\le n-1$, supported in $B(0,R)$ and such that $\partial T=0$. Then there exists an integral $(k+1)$-current $S$, supported in $B(0,R)$, such that $\partial S=T$ and for some dimensional constant $C$
    \[
    \M(S)\le CR\M(T).
    \]
\end{lemma}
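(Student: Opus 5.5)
The natural approach is the classical cone construction: take $S := 0 \mathbin{\#} T$, the cone over $T$ with vertex at the origin, defined as the pushforward $S := h_*(\curr{0,1}\times T)$ under the homotopy $h(t,x) := tx$ from the zero map to the identity. Since $T$ is integral, the product $\curr{0,1}\times T$ is an integral $(k+1)$-current in $\R\times\R^n$. The map $h$ is smooth, and it is proper on the relevant support $[0,1]\times B(0,R)$. Pushforward by such a map preserves integer rectifiability (the image of a rectifiable set under a Lipschitz map is rectifiable, and multiplicities stay integer by the area formula), and it commutes with $\partial$. So $S$ is integer rectifiable with finite mass.

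The boundary is then computed with the homotopy formula
\[
\partial\big(h_*(\curr{0,1}\times T)\big) = h(1,\cdot)_*T - h(0,\cdot)_*T - h_*(\curr{0,1}\times \partial T).
\]
The last term vanishes because $\partial T=0$. For the middle term, $h(0,\cdot)\equiv 0$ is constant, so it pushes a $k$-current with $k\ge1$ into a point, and the pushforward of any $k$-form to a point is zero. Hence $\partial S = T$. Since $T$ is integer rectifiable, $\partial S$ is integer rectifiable, so $S$ is integral. Support: $h([0,1]\times B(0,R))\subseteq B(0,R)$ because the ball is star-shaped about $0$; if one insists on the open ball, the same holds using $\supp T\subseteq B(0,R)$.

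For the mass bound, apply the pushforward formula with $T=\tau\|T\|$. At a point $(t,x)$ the oriented tangent of the product is $e_t\wedge\tau(x)$, and
\[
dh_{(t,x)}[e_t\wedge \tau]= x\wedge (t^k\tau).
\]
Therefore $|dh[e_t\wedge\tau]|\le |x|\,t^k\,|\tau|$, up to the dimensional constant coming from comparing mass and comass norms of the $k$-vectors. Integrating gives
\[
\M(S)\le C\int_0^1 t^k\,dt\int |x|\,d\|T\|(x)\le \frac{C R}{k+1}\,\M(T).
\]

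The main point to check is the legitimacy of the homotopy formula and the mass estimate for currents that are only normal, not smooth. The cleanest way is to take the homotopy formula for normal currents from \cite{KP} (Federer 4.1.9–4.1.10). Alternatively, one can verify $\partial S=T$ directly by testing against forms: there the identity reduces to the Cartan-type formula $d(h^\ast\omega)$ integrated in $t$, which is the classical Poincaré lemma computation. Everything else is bookkeeping.
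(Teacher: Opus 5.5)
Your proposal is correct: the cone $S=0\mathbin{\#}T=h_*(\curr{0,1}\times T)$, the homotopy formula giving $\partial S=T$, and the bound $\M(S)\le \int_0^1 t^k\,dt\int|x|\,d\|T\|\le \frac{R}{k+1}\M(T)$ are the standard argument behind \cite[7.4.4]{KP}, which the paper cites without proving. Since $T$ is integral, $\tau$ is simple $\|T\|$-a.e., so $|x\wedge\tau|\le|x|$ holds without any norm-comparison constant, and you in fact get $C=1/(k+1)$.
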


\section{Boundary rectifiability}\label{sec:boundary_rectifiability}

\subsection{Proof for $1$-currents}\label{sec:1-dim_proof}

We give the proof of the $1$-dimensional case first (i.e., $k=0$), as the argument is very straightforward and does not require additional results, but already contains the essence of the argument. The higher-dimensional proof will follow the same philosophy, but additional work is needed to show the preservation of approximate continuity under projection.

\begin{theorem}[$1$-dimensional case]
    Let $S$ be an integer rectifiable $1$-current in $\R^n$, with $\Msf(S)+\Msf(\partial S)<\infty$. Then $\partial S$ is supported on a finite set, and it has integer multiplicity.
\end{theorem}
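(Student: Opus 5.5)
We follow the outline of the introduction in the simplest case $k=0$. Here $\partial S$ is a $0$-current with finite mass, so $\partial S=\sigma\mu$ with $\mu=\|\partial S\|$ a finite measure and $\sigma(x)\in\{\pm1\}$, since a unit $0$-vector is a sign. The cylindrical projection degenerates to the distance function $f_{x_0}(x):=|x-x_0|$, a proper $1$-Lipschitz map $\R^n\to\R$. No choice of plane $V$ is needed, and no approximate continuity has to be transferred. The claim will follow if we show two things: every $x_0\in\supp\mu$ carries an atom $\mu(\{x_0\})\ge 1$, and the signed mass $\partial S(\{x_0\})$ is an integer.

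\textbf{Step 1: the pushforward is a $\Z$-valued $BV$ function.} Fix $x_0$. The current $f_*S:=(f_{x_0})_*S$ is a $1$-current on $\R$ with finite mass, and it is integer rectifiable: on the rectifiable carrier of $S$ the area formula shows that $f_*S$ is represented by an $L^1$ function $u$ with integer values, namely the oriented count of preimages. Hence $f_*S=u\,\curr{\R}$ with $u\in L^1(\R,\Z)$. Its boundary is $\partial f_*S=f_*\partial S$, which has mass at most $\M(\partial S)$, so $Du$ is a finite measure. Thus $u\in BV(\R,\Z)$.

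\textbf{Step 2: the jump at the origin is the atom.} A one-dimensional $\Z$-valued $BV$ function has a good representative that is piecewise constant with finitely many jumps, since each jump costs at least $1$ in total variation. So $Du=\sum_i m_i\delta_{t_i}$ with $m_i\in\Z\setminus\{0\}$, and there are at most $\M(\partial S)$ jumps. Since $f^{-1}(0)=\{x_0\}$, Lemma \ref{lemma:pushforward} for $k=0$ (or simply the definition of pushforward of a measure) gives
\[
Du(\{0\})=(f_*\partial S)(\{0\})=\partial S(\{x_0\}).
\]
It follows that $\partial S(\{x_0\})\in\Z$ for every $x_0$.

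\textbf{Step 3: every support point is an atom.} It remains to show $\mu(\{x_0\})>0$ for every $x_0\in\supp\mu$, and this is the main obstacle, because cancellations could hide mass. Here we use that $f_{x_0}$ is proper: $f^{-1}((-r,r))=B_r(x_0)$. Suppose $\mu(\{x_0\})=0$. Then $\mu(B_r(x_0))\to0$ as $r\to0$, so $|Du|((-r,r))\le\mu(B_r(x_0))<1$ for small $r$. Since $u$ is $\Z$-valued, it is then constant on $(-r,r)$, and so $f_*\partial S\restrict(-r,r)=0$. To pass from this to $\partial S\restrict B_r(x_0)=0$, we vary the center. For every $y$ near $x_0$ with $\mu(\{y\})=0$, apply the same argument to $f_y$. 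For each fixed $s\in(0,r)$ this gives $\partial S(B_s(y))=0$, once the relevant spheres are $\mu$-null, which holds for a.e.\ radius. The signed measure $\partial S\restrict B_{r/2}(x_0)$ therefore vanishes on a.e.\ small ball around $\mu$-a.e.\ point. Differentiating, via Besicovitch, shows that $\partial S\restrict B_{r/2}(x_0)=0$. This contradicts $x_0\in\supp\mu$. Making this step uniform in $y$ needs care, and as a fallback we would work directly with the set $A=\{x:\mu(\{x\})\ge1\}$ and the diffuse part $\mu_d:=\mu-\mu\restrict A$.

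\textbf{Step 4: conclusion.} By Step 3, $\mu$ is purely atomic, each atom has integer weight of size at least $1$, and the support is therefore a finite set of cardinality at most $\M(\partial S)$. So $\partial S=\sum_j m_j\delta_{x_j}$ with $m_j\in\Z$.
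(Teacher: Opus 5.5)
Your argument is correct and is essentially the paper's. It uses the same projection $f(x)=|x-x_0|$, the same identification of $f_*S$ with some $u\in BV(\R,\Z)$ whose derivative is a finite sum of integer atoms, and the same differentiation of $\partial S$ against $\|\partial S\|$ to rule out cancellations. The paper runs your Step 3 directly, at a Lebesgue point $x_0$ of the polar: there $\partial S(B_r(x_0))\neq 0$ for all small $r$, which forces $0$ to be a jump of $u$ and hence $\|\partial S\|(\{x_0\})\ge 1$, so only one center is needed.

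In your contrapositive version, the uniformity you worry about is automatic. If $\|\partial S\|(B_r(x_0))<1$, then $\|\partial S\|(B_s(y))<1$, and hence $\partial S(B_s(y))=0$, for every $y\in B_{r/2}(x_0)$ and every $s<r/2$. There is no need to restrict to non-atomic $y$ or to a.e.\ radius, because $f_y^{-1}((-s,s))=B_s(y)$ exactly.
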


\begin{proof}
    Let $\mu$ denote the (signed) measure $\partial S$. We can write $\mu=\nu|\mu|$, for some $\nu$ attaining only values $\pm 1$ at $|\mu|$-a.e. point. Let us fix a strong Lebesgue point $x_0$ for $\nu$ with respect to $\mu$, i.e. $x_0\in\supp |\mu|$  such that $|\nu(x_0)|=1$ and
    \[
    \lim_{r\to 0} \fint_{B_r(x)} |\nu(y)-\nu(x_0)|\,d|\mu|(y)=0.
    \]
    By standard differentiation theorems, the measure $\mu$ is concentrated on the set of such points.
    We consider the projection map $f:\R^n\to \R$ given by
    \[
    f(x):=|x-x_0|,
    \]
    and define the pushforward measure $f_\# \mu$. Observe that $f$ is $1$-Lipschitz and proper and attains only non-negative values, hence $f_*S$ is an integer rectifiable $1$-current with boundary of finite mass supported in $[0,\infty)$, thus can be identified with a $\mathbb{Z}$-valued $BV$ function $u$. Moreover $\partial(f_*S)=f_*(\partial S)=f_\#\mu$ can be identified with $Du$. We claim that $0$ belongs to the support of $f_\#\mu$. Indeed,
    \begin{equation}\label{eq:1dim}
        f_\# \mu(B_r(0))=\mu(f^{-1}(B_r(0)))=\mu(B_r(x_0))=\int_{B_r(x_0)} \nu(y) d|\mu|(y)
    \end{equation}
    and thanks to the Lebesgue property this is non-zero for all sufficiently small $r>0$.
    Since $f_\#\mu=Du$, with $u\in BV(\R,\Z)$, we know that $Du$ is purely jump and thus $0$ must be a jump point for $u$, and as a consequence $|f_\#\mu|\ge \delta_0$. This implies that $|\mu|(B_r(x_0))\ge 1$. Since $\mu$ is a finite measure, this can happen only at finitely many points, and we deduce that the support of $\|\partial S\|$ is finite. Moreover, $\partial S$ has integer multiplicity, since for every $x_0$ in the support of $\partial S$ the expression in \eqref{eq:1dim} converges to $Du(\{0\})$, which is an integer.
\end{proof}

 The extension of this strategy to higher-dimensional currents requires to use the cylindrical projection $f_{x_0,V}$ defined in \eqref{eq:projection_definition} instead of the distance function, and to rely on Theorem \ref{thm:blowup} instead of the structure of one-dimensional $BV$ functions.

\subsection{Adapted projection lemma}

The following result is in a way the key to the preservation of approximate continuity. It tells us that for any $k$-vector $\tau$ we can find an orthonormal basis $e_1,\ldots,e_n$ of $\R^n$ such that $\tau$ has a non-trivial component along $e_1\wedge\ldots\wedge e_k$, and such that the only other non-trivial components, when written in the basis $\{e_{i_1}\wedge\ldots\wedge e_{i_k}\}$, are those that are missing at least \textit{two} elements of $\{e_1,\ldots,e_k\}$. In particular, if $V=\mathrm{span}\{e_1,\ldots, e_k\}$, all these additional components are sent to zero by the cylindrical projection $f_{x_0,V}$. The way to choose $e_1,\ldots, e_k$ is conceptually very simple: we choose the best approximation of $\tau$ with a simple $k$-vector.

\begin{lemma}[Adapted projection]\label{lemma:decomposition}
    Let $\tau\in \Wedge_k(\R^n)$ be an arbitrary $k$-vector. Then there exists an orthonormal basis $e_1,\ldots,e_n$ of $\R^n$ such that
    \begin{equation}\label{eq:decomposition_tau}
    \tau =\tau_{1,\ldots,k}\, e_1\wedge\ldots \wedge e_k+\sum_{\ibf\in I(n,k): \, \#(\ibf\cap\{1,\ldots,k\})\le k-2} \tau_\ibf e_\ibf ,\qquad \tau_{1,\ldots,k}\ne 0.
    \end{equation}
    In particular, with $V:=\mathrm{span}\{e_1,\ldots ,e_k\}$,
    \begin{equation}\label{eq:projection_of_tau}
        d (f_{x_0,V})|_x [\tau]=\tau_{1,\ldots,k}e_1\wedge \ldots\wedge e_k\qquad\text{for every $x\in\R^n$,}
    \end{equation}
    where $(df_{x_0,V})|_x$ is defined as in Lemma \ref{lemma:pushforward}.
\end{lemma}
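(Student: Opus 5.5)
The idea is to take $e_1\wedge\ldots\wedge e_k$ to be a best approximation of $\tau$ among unit simple $k$-vectors, and read \eqref{eq:decomposition_tau} off the first-order optimality conditions. Throughout I assume $\tau\neq 0$; this is implicit in the statement, since otherwise $\tau_{1,\ldots,k}\ne0$ is impossible, and it is the only case used later.

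\emph{Step 1: choosing $V$.} Consider the function $\xi\mapsto\langle\tau,\xi\rangle$ on the set of unit simple $k$-vectors $\xi=v_1\wedge\ldots\wedge v_k$ with $v_i$ orthonormal. This set is compact, being a continuous image of the Stiefel manifold, so a maximizer $e_1\wedge\ldots\wedge e_k$ exists. Fix any orthonormal basis $\{u_j\}$ of $\R^n$. Since $\tau\neq0$, some coefficient $\tau_{\mathbf j}$ in the basis $\{u_{\mathbf j}\}$ is nonzero, and $\pm u_{\mathbf j}$ is an admissible competitor. Hence the maximum is at least $|\tau_{\mathbf j}|>0$, which gives $\tau_{1,\ldots,k}=\langle\tau,e_1\wedge\ldots\wedge e_k\rangle>0$. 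Complete $e_1,\ldots,e_k$ to an orthonormal basis $e_1,\ldots,e_n$ of $\R^n$ arbitrarily.

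\emph{Step 2: first variation.} Fix $i\le k<j$ and rotate in the $(e_i,e_j)$-plane: replace $e_i$ by $\cos t\,e_i+\sin t\,e_j$. The vectors remain orthonormal, so
\[
t\mapsto\big\langle\tau,\,e_1\wedge\ldots\wedge(\cos t\,e_i+\sin t\,e_j)\wedge\ldots\wedge e_k\big\rangle
\]
is smooth and maximal at $t=0$. Its derivative at $t=0$ is $\langle\tau,e_1\wedge\ldots\wedge e_j\wedge\ldots\wedge e_k\rangle$, with $e_j$ in the $i$-th slot. Up to sign this is the coefficient $\tau_{\ibf}$ for $\ibf=(\{1,\ldots,k\}\setminus\{i\})\cup\{j\}$, so that coefficient vanishes. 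These multi-indices are exactly those with $\#(\ibf\cap\{1,\ldots,k\})=k-1$. Every multi-index meets $\{1,\ldots,k\}$ in exactly $k$, exactly $k-1$, or at most $k-2$ elements, so \eqref{eq:decomposition_tau} follows.

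\emph{Step 3: the projection formula \eqref{eq:projection_of_tau}.} Take $x\notin x_0+V$ and write $w(x)$ for the unit vector in $V^\perp$ pointing from $x_0+V$ to $x$.
\begin{itemize}
\item For $i\le k$, $df|_x[e_i]$ is the $i$-th standard basis vector of $\R^{k+1}$, since $\dist(\cdot-x_0,V)$ is invariant under translations along $V$.
\item For $j>k$, $df|_x[e_j]=(0,\ldots,0,e_j\cdot w(x))$, which lies in the line spanned by the last coordinate vector of $\R^{k+1}$.
\end{itemize}
By the first bullet, the leading term $e_1\wedge\ldots\wedge e_k$ is mapped to $e_1\wedge\ldots\wedge e_k$, now viewed in $\R^{k+1}$. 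Every remaining $e_\ibf$ in \eqref{eq:decomposition_tau} contains at least two factors $e_j$ with $j>k$. Their images are parallel, so the wedge vanishes. For $x\in x_0+V$, the definition in Lemma \ref{lemma:pushforward} keeps only the component $\tau^V=\tau_{1,\ldots,k}\,e_1\wedge\ldots\wedge e_k$, which gives the same result.

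The main point of care is Step 2. One must check that the rotation stays inside the admissible class and get the derivative right, including the sign of the coefficient after reordering the indices; the sign is irrelevant because the coefficient is shown to vanish. The rest is bookkeeping.
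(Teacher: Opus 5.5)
Your proof is correct and follows the same route as the paper. The paper takes $e_1\wedge\ldots\wedge e_k$ to be the direction of a best approximation of $\tau$ by simple $k$-vectors, which is equivalent to your maximization of $\langle\tau,\xi\rangle$ over unit simple $\xi$, since $\min_c|\tau-c\xi|^2=|\tau|^2-\langle\tau,\xi\rangle^2$. It then rules out the coefficients with $\#(\ibf\cap\{1,\ldots,k\})=k-1$ using the explicit competitor $e_1\wedge\ldots\wedge e_{k-1}\wedge(\tau_{1,\ldots,k}e_k+\tau_{1,\ldots,k-1,k+1}e_{k+1})$; this is just the finite, optimal-angle version of your infinitesimal rotation in the $(e_i,e_j)$-plane. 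Your Step 3 is also the paper's argument, and your formula $df|_x[e_j]=(e_j\cdot w(x))\,(0,\ldots,0,1)$ is actually more accurate than the paper's claim that $df|_x[e_j]$ equals the last basis vector, although only the parallelism matters.
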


\begin{proof}
    Consider the following problem:
    \[
    \inf\{|\tau-v|:\, \text{$v$ simple $k$-vector}\}.
    \]
    It is clear that, by compactness, the  minimum is achieved by some non-zero simple vector $v=v_1\wedge\ldots\wedge v_k$. Observe that $0$ can not be a minimum because
    \[
    |\tau-0|=|\tau|=\sqrt{\sum \tau_{\ibf}^2}>|\tau-\tau_{\ibf_0}e_{\ibf_0}|
    \]
    for any choice of $\ibf_0$ with $\tau_{\ibf_0}\ne 0$.
    We can also find an orthonormal basis $\{e_1,\ldots,e_n\}$ such that $v$ is a multiple of $e_1\wedge\ldots\wedge e_k$. We claim that this basis is the one we are looking for. Writing 
    %First we can write $\tau$ in this basis as
    %\[
    $\tau=\sum_\ibf \tau_\ibf e_\ibf$
    %\]
    in this basis, observe that necessarily $v=\tau_{\ibf_0}e_{\ibf_0}$, with $\ibf_0=(1,\ldots,k)$ (since otherwise we would find a better competitor) and that
    \[
    |\tau-v|^2=\sum_{\ibf\ne \ibf_0} \tau_{\ibf}^2.
    \]
    Suppose now by contradiction that $\tau_{\ibf_1}\ne 0$ for some $\ibf_1$ with $\#(\ibf_1\cap \{1,\ldots,k\})= k-1$. We can assume, up to a relabeling, that $\ibf_1=\{1,\ldots, k-1,k+1\}$. Then we can write
    \begin{align*}
        \tau&=\tau_{\ibf_0} e_1\wedge \ldots \wedge e_k+\tau_{\ibf_1}e_1\wedge\ldots\wedge e_{k-1}\wedge e_{k+1}+\sum_{\ibf\not\in\{\ibf_0,\ibf_1\}%\substack{\ibf\ne \ibf_0\\\ibf\ne \ibf_1}
        }\tau_{\ibf}e_{\ibf}\\
        &=  \underbrace{e_1\wedge \ldots \wedge e_{k-1}\wedge (\tau_{\ibf_0}e_k+\tau_{\ibf_1} e_{k+1})}_{\qquad=:w}+\sum_{\ibf\not\in\{\ibf_0,\ibf_1\}%\substack{\ibf\ne \ibf_0\\\ibf\ne \ibf_1}
        }\tau_{\ibf}e_{\ibf}
    \end{align*}
    where clearly the $k$-vector $w$ is simple. We now observe that
    \[
    |\tau-w|^2=\sum_{\substack{\ibf\ne \ibf_0\\\ibf\ne \ibf_1}}\tau_{\ibf}^2<\sum_{\ibf\ne \ibf_0}\tau_{\ibf}^2=|\tau -v|^2
    \]
    because $\tau_{\ibf_1}\ne 0$. This shows that $v$ was not a minimizer, and thus gives a contradiction. It follows that $\tau_{\ibf}= 0$ for every $\ibf$ with $\#(\ibf\cap \{1,\ldots,k\})= k-1$, hence the thesis.    

    We are left to show \eqref{eq:projection_of_tau}. If $x\in x_0+V$ the equality follows by definition. If instead $x\not\in x_0+ V$, it is sufficient to observe that for every $i\in\{k+1,\ldots,n\}$
    \[
    d(f_V)|_x[e_i]=e_{k+1},
    \]
    therefore all terms appearing in the sum in \eqref{eq:decomposition_tau}, when projected to $\R^{k+1}$, have a repeating factor $e_{k+1}$, thus are zero.
\end{proof}

\subsection{Mass comparison under projection}
Given a $1$-Lipschitz map $f$, one always has $\|f_*T\|(\R^n)\le \|T\|(\R^n)$, but in principle there could be a large drop of the total mass. The next lemma shows that, if $f$ is the projection defined in \eqref{eq:projection_definition}, then this drop is controlled, at least locally, around approximate continuity points of the orienting vector field.

\begin{lemma}[Mass comparison]\label{lemma:mass_comparison}
    Let $T=\tau\mu$ be a normal $k$-current, and let $x_0$ be a Lebesgue point of $\tau$, with value $\tau_0=\tau(x_0)$. Set
    \[
    \omega(r):=\fint_{B_r(x_0)}|\tau(x)-\tau_0|d\mu(x),
    \]
    so that $\omega(r)\to 0$ as $r\to 0$.
    Let $V$ be any $k$-plane, and $f=f_{x_0,V}$ the corresponding projection map defined in \eqref{eq:projection_definition}. Then for every $r>0$ and every open subset $\Omega\subset B_r(0)$,
    \begin{equation}\label{eq:mass_comparison}
    \|T\|(f^{-1}(\Omega))\ge \|f_*T\|(\Omega)\ge |\tau_0^V| \|T\|(f^{-1}(\Omega))-\|T\|(B_r(x_0))\omega(r).
    \end{equation}
    In particular,
    \begin{equation}\label{eq:mass_comparison_ratio}
        \liminf_{r\to 0}\frac{\|f_*T\|(B_r(0))}{\|T\|(B_r(x_0))}\ge |\tau_0^V|.
    \end{equation}
\end{lemma}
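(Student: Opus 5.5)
The plan is to compute the mass of $f_*T$ restricted to $\Omega$ directly from the pushforward formula of Lemma \ref{lemma:pushforward}, and to compare the pushed orientation with the fixed $k$-vector $\tau_0$.

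The upper bound is immediate. The map $f=f_{x_0,V}$ is $1$-Lipschitz, since each component $(x-x_0)\cdot v_i$ and $\dist(x-x_0,V)$ is $1$-Lipschitz and together they are the orthogonal coordinates on $V\oplus V^\perp$ with $V^\perp$ collapsed to its norm. Hence $|df|_x[\tau(x)]|\le |\tau(x)|=1$, also on $x_0+V$ where $df|_x[\tau]=(\pi_V(\tau^V),0)$ has norm $|\tau^V|\le 1$. By Lemma \ref{lemma:pushforward}, $f_*T\restrict\Omega=f_*(T\restrict f^{-1}(\Omega))$, and its action on a form $\omega$ with $|\omega|\le1$ is bounded by $\mu(f^{-1}(\Omega))$.

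For the lower bound I will test against a constant covector. Let $\xi$ be the unit $k$-covector on $\R^{k+1}$ dual to $e_1\wedge\dots\wedge e_k$, with $V$ identified with $\R^k\times\{0\}$, and with the sign chosen so that $\langle\xi,(\pi_V\tau_0^V,0)\rangle=|\tau_0^V|$. The key pointwise fact is that for every $x$ the $e_1\wedge\dots\wedge e_k$ component of $df|_x[\tau]$ equals that of $\tau^V$: the differential is the identity on $V$ and sends $V^\perp$ into the $e_{k+1}$ direction. So $\langle\xi,df|_x[\tau(x)]\rangle=\langle\xi,\tau^V(x)\rangle$, which is linear and $1$-Lipschitz in $\tau$.

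To obtain the estimate I will approximate $\xi\1_\Omega$ by forms $\phi_j\xi$, with $\phi_j\in C_c^\infty(\Omega)$, $0\le\phi_j\le 1$ and $\phi_j\uparrow\1_\Omega$, so that
\[
\|f_*T\|(\Omega)\ge\lim_j\int\phi_j(f(x))\langle\xi,\tau^V(x)\rangle\,d\mu=\int_{f^{-1}(\Omega)}\langle\xi,\tau^V\rangle\,d\mu .
\]
Next I write $\langle\xi,\tau^V(x)\rangle\ge|\tau_0^V|-|\tau(x)-\tau_0|$. Because $f^{-1}(B_r(0))=B_r(x_0)$, we have $f^{-1}(\Omega)\subset B_r(x_0)$, so the error term is at most $\int_{B_r(x_0)}|\tau-\tau_0|\,d\mu=\|T\|(B_r(x_0))\,\omega(r)$. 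This gives \eqref{eq:mass_comparison}.

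For \eqref{eq:mass_comparison_ratio}, take $\Omega=B_r(0)$ and use $f^{-1}(B_r(0))=B_r(x_0)$. Dividing by $\|T\|(B_r(x_0))$ gives a ratio of at least $|\tau_0^V|-\omega(r)$, and $\omega(r)\to0$. Here $x_0\in\supp\mu$, which holds since $x_0$ is a Lebesgue point, so the denominators are positive.

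The main obstacle is the pointwise identity on the $e_{1}\wedge\dots\wedge e_k$ component off $x_0+V$. I will check it in coordinates: $df|_x e_i=e_i$ for $i\le k$ and $df|_x e_i=c_i e_{k+1}$ for $i>k$. Any basis multivector containing some $e_i$ with $i>k$ is therefore mapped to something containing $e_{k+1}$, and such terms are annihilated by $\xi$. The remaining point is that $\tau^V$ is understood as the $e_1\wedge\dots\wedge e_k$ component of $\tau$, consistent with Lemma \ref{lemma:pushforward}.
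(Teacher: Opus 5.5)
Your proof is correct and follows essentially the same route as the paper: test $f_*T$ against $\phi\,dx_1\wedge\dots\wedge dx_k$ with $\phi\in C^\infty_c(\Omega)$, split $\tau=\tau_0+(\tau-\tau_0)$, bound the error by $\omega(r)\|T\|(B_r(x_0))$ using $f^{-1}(\Omega)\subset B_r(x_0)$, and take the supremum over $\phi$. The differences are cosmetic. The paper reads off the pairing directly from the fact that the first $k$ components of $f$ are affine, so the pulled-back form is just $(\phi\circ f)\,dx_1\wedge\dots\wedge dx_k$; you instead go through the formula of Lemma \ref{lemma:pushforward} and a pointwise component identity, and you spell out the approximation $\phi_j\uparrow\1_\Omega$ and the sign normalization.
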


\begin{proof}
    The left-most inequality in \eqref{eq:mass_comparison} follows from the $1$-Lipschitzianity of $f$, so we only need to prove the right-most inequality. We can suppose without loss of generality that $|\tau_0^V|>0$, otherwise there is nothing to prove, and we assume for simplicity that $\tau_0^V >0$.
    
    Fix any $\phi\in C^\infty_c(\Omega)$ with $\|\phi\|_{C^0}\le 1$. Then
    \begin{align*}
        \|f_*T\|(\Omega)&\ge \langle f_*T,\phi dx_1\wedge\ldots\wedge dx_k\rangle\\
        &= \langle T,\phi\circ f dx_1\wedge\ldots\wedge dx_k\rangle\\
        &= \int \phi(f(x))\langle dx_1\wedge\ldots\wedge dx_k,\tau(x)\rangle d\|T\|(x)\\
        &=\int \phi(f(x))\langle dx_1\wedge\ldots\wedge dx_k,\tau_0\rangle d\mu(x)\\
        &\qquad +\int \phi(f(x))\langle dx_1\wedge\ldots\wedge dx_k,\tau(x)-\tau_0\rangle d\|T\|(x).
    \end{align*}
    We can estimate the modulus of the second integral by
    \[
    \|\phi\|_{C^0}\int |\tau(x)-\tau_0|d\|T\|(x)\le \omega(r) \|T\|(B_r(x_0)).
    \]
    Instead the first integral coincides with
    \[
    \int \phi(f(x))\tau_0^V d\|T\|(x).
    \]
    By taking the supremum among all $\phi$ with support in $\Omega$ and modulus at most 1 we find
    \[
    \|f_*T\|(\Omega)\ge \tau_0^V \|T\|(f^{-1}(\Omega))-\omega(r) \|T\|(B_r(x_0)).
    \]
    This proves \eqref{eq:mass_comparison}. To prove \eqref{eq:mass_comparison_ratio} it is sufficient to apply the previous inequality with $\Omega=B_r(0)$, observing that $f^{-1}(B_r(0))=B_r(x_0)$.
\end{proof}

We can immediately deduce two useful corollaries, connecting density and geometric properties of $f_*T$ and $T$.

\begin{corollary}[Density comparison]\label{cor:density_comparison}
    Let $T$, $x_0$, $\tau_0$ and $\omega$ be as in Lemma \ref{eq:mass_comparison}, with $\tau_0^V\ne 0$. Then
    \[
    \frac{\|T\|(f^{-1}(\Omega))}{\|T\|(B_r(x_0))}\le \frac{1}{|\tau_0^V|} \frac{\|f_*T\|(\Omega)}{\|f_*T\|(B_r(0))}+\frac{1}{|\tau_0^V|}\omega(r).
    \]
\end{corollary}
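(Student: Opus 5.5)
The plan is to read the corollary directly off the two inequalities in \eqref{eq:mass_comparison} of Lemma \ref{lemma:mass_comparison}. Fix $r>0$ and an open set $\Omega\subset B_r(0)$, and write $f=f_{x_0,V}$.

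First apply the right-most inequality of \eqref{eq:mass_comparison} to $\Omega$ itself:
\[
|\tau_0^V|\,\|T\|(f^{-1}(\Omega))\le \|f_*T\|(\Omega)+\|T\|(B_r(x_0))\,\omega(r).
\]
Dividing by $|\tau_0^V|\,\|T\|(B_r(x_0))$ gives
\[
\frac{\|T\|(f^{-1}(\Omega))}{\|T\|(B_r(x_0))}\le \frac{1}{|\tau_0^V|}\frac{\|f_*T\|(\Omega)}{\|T\|(B_r(x_0))}+\frac{\omega(r)}{|\tau_0^V|}.
\]
This division is legitimate because $x_0$ lies in the support of $\mu$, being a Lebesgue point, so $\|T\|(B_r(x_0))>0$. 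It only remains to replace the denominator $\|T\|(B_r(x_0))$ by $\|f_*T\|(B_r(0))$ in the first term on the right.

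For this, apply the left-most inequality of \eqref{eq:mass_comparison} with $\Omega=B_r(0)$. Together with the identity $f^{-1}(B_r(0))=B_r(x_0)$, it gives $\|f_*T\|(B_r(0))\le \|T\|(B_r(x_0))$. Hence $1/\|T\|(B_r(x_0))\le 1/\|f_*T\|(B_r(0))$ whenever the latter is positive, and substituting this into the display above yields the claimed inequality.

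The one point needing care is the case $\|f_*T\|(B_r(0))=0$. If we also assume $\|f_*T\|(\Omega)>0$, this case cannot occur, since $\Omega\subset B_r(0)$. If instead $\|f_*T\|(\Omega)=0$, the quotient has the form $0/0$, which we interpret as $0$. The bound still holds in that case, because the estimate in the first display does not involve this quotient at all.
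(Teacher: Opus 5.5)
Your proof is correct and is essentially the paper's argument: apply the right-hand inequality of \eqref{eq:mass_comparison}, divide by $|\tau_0^V|\,\|T\|(B_r(x_0))$, and then enlarge the first term using $\|f_*T\|(B_r(0))\le\|T\|(B_r(x_0))$, which follows from $1$-Lipschitzianity and $f^{-1}(B_r(0))=B_r(x_0)$. Your handling of the degenerate case $\|f_*T\|(B_r(0))=0$ is extra care the paper omits. That case in fact cannot occur for small $r$: the same lemma with $\Omega=B_r(0)$ gives $\|f_*T\|(B_r(0))\ge(|\tau_0^V|-\omega(r))\,\|T\|(B_r(x_0))>0$ once $\omega(r)<|\tau_0^V|$.
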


\begin{proof} Using Lemma \ref{lemma:mass_comparison} and that $\|f_*T\|(B_r(0))\le \|T\|(B_r(x_0))$ we deduce
    \begin{align*}
    \frac{|\tau_0^V|\|T\|(f^{-1}(\Omega))}{\|T\|(B_r(x_0))}& \le \frac{\|f_*T\|(\Omega)+\|T\|(B_r(x_0))\omega(r)}{\|T\|(B_r(x_0))}     \le \frac{\|f_*T\|(\Omega)}{\|f_*T\|(B_r(x_0))}+\omega(r).\qedhere
    \end{align*}
\end{proof}

Before stating the next corollary, let us recall the following definition: given $x\in \R^n$, $L>0$ and a $k$-plane $V$ we define the cone
 \[
 X(x,V,L):=\{y\in\R^n:\, |\pi_{V^\perp}(y-x)|\le L|\pi_V(y-x)|\}.
 \]

\begin{corollary}[Tangent cones under projection]\label{cor:cones_under_projection}
    Let $T=\tau\|T\|$ be a normal $k$-current, and let $x_0$ be a Lebesgue point of $\tau$, with value $\tau_0=\tau(x_0)$. Let $V$ be any $k$-plane for which $\tau_0^V\ne 0$, and let $f=f_{x_0,V}$ be the associated cylindrical projection as in \eqref{eq:projection_definition}. Assume that the current $f_*T$ admits a tangent cone about $f(V)$ at the origin, in the sense that
    \[
    \lim_{r\to 0}\frac{\|f_*T\| (B_r(0)\setminus X(0,f(V),L))}{\|f_*T\|(B_r(0))}=0
    \]
    for some $L$. Then $T$ admits a tangent cone about $V$ at $x_0$, in the sense that
    \begin{equation}\label{eq:tangent_cone}
    \lim_{r\to 0}\frac{\|T\| (B_r(x_0)\setminus X(x_0,V,L))}{\|T\|(B_r(x_0))}=0.
    \end{equation}
\end{corollary}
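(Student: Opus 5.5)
The plan is to apply Corollary \ref{cor:density_comparison} with the open set
\[
\Omega_r:=B_r(0)\setminus X(0,f(V),L),
\]
and then let $r\to 0$. Here $f(V)$ is understood as $f(x_0+V)=\R^k\times\{0\}$. Since $X(0,f(V),L)$ is closed, $\Omega_r$ is an open subset of $B_r(0)$, so the corollary applies. It gives
\[
\frac{\|T\|(f^{-1}(\Omega_r))}{\|T\|(B_r(x_0))}\le \frac{1}{|\tau_0^V|}\,\frac{\|f_*T\|(\Omega_r)}{\|f_*T\|(B_r(0))}+\frac{1}{|\tau_0^V|}\,\omega(r).
\]
By hypothesis the first term on the right tends to $0$. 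The second tends to $0$ because $x_0$ is a Lebesgue point of $\tau$. The hypothesis already presupposes $\|f_*T\|(B_r(0))>0$ for small $r$, and then $\|T\|(B_r(x_0))\ge \|f_*T\|(B_r(0))>0$, so all ratios are well defined.

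The remaining step, which is the only genuinely geometric point, is the identity
\[
f^{-1}(\Omega_r)=B_r(x_0)\setminus X(x_0,V,L).
\]
To check it, write $f(y)=(\pi_V(y-x_0),\,|\pi_{V^\perp}(y-x_0)|)$, identifying $V$ with $\R^k$ through the basis $v_1,\dots,v_k$. This identification is an isometry, so $|f(y)|=|y-x_0|$, which gives $f^{-1}(B_r(0))=B_r(x_0)$, as already noted in the paper. The cone $X(0,f(V),L)$ in $\R^{k+1}$ is $\{(z,t):|t|\le L|z|\}$. Hence $f(y)$ lies in it if and only if $|\pi_{V^\perp}(y-x_0)|\le L|\pi_V(y-x_0)|$, that is, if and only if $y\in X(x_0,V,L)$. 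Taking preimages of the set difference then yields the identity, with complements preserved.

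Substituting this identity into the inequality, the left-hand side is exactly the ratio in \eqref{eq:tangent_cone}, and it is bounded by a quantity tending to zero. This proves the corollary. I expect no real obstacle: the argument is just Corollary \ref{cor:density_comparison} combined with the observation that the cylindrical projection pulls balls back to balls and cones back to cones. The only point needing care is the identification of $f(V)$ with the hyperplane $\{x_{k+1}=0\}$ together with the nonnegativity of the last coordinate of $f$.
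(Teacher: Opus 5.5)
Your proposal is correct and is essentially the paper's proof. You apply Corollary \ref{cor:density_comparison} with $\Omega=B_r(0)\setminus X(0,f(V),L)$ and use the preimage identity $f^{-1}(B_r(0)\setminus X(0,f(V),L))=B_r(x_0)\setminus X(x_0,V,L)$; you verify that identity in more detail than the paper does, including the intended reading of $f(V)$ as $\R^k\times\{0\}$.
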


\begin{proof}
    From the definition it is straightforward to check that $f$ preserves cones, and more precisely
    \[
    f^{-1}(B_r(0)\setminus X(0,f(V),L))=B_r(x_0)\setminus X(x_0,V,L).
    \]
    From Corollary \ref{cor:density_comparison}, applied with $\Omega=B_r(0)\setminus X(0,f(V),L)$, we obtain
    \[
    \frac{\|T\|(B_r(x_0)\setminus X(x_0,V,L))}{\|T\|(B_r(x_0))}\le \frac{1}{|\tau_0^V|} \frac{\|f_*T\|(B_r(0)\setminus X(0,V,L))}{\|f_*T\|(B_r(0))}+\frac{1}{|\tau_0^V|} \omega(r).
    \]
    By assumption the right-hand side goes to zero as $r\to 0$, hence the conclusion.
\end{proof}

\subsection{Approximate continuity under projection}
Our next goal is showing that the approximate continuity property is preserved under the cylindrical projection $f_{x_0,V}$, if $V$ is chosen as in Lemma \ref{lemma:decomposition}. The point is that, with this choice, the projection removes exactly the orthogonal components of the orientation that could otherwise create corners or oscillations in the image current. %We start with a simple lemma.

\begin{lemma}[Approximate continuity under projection]\label{lemma:preservation_Lebesgue}
    Let $T=\tau\|T\|$ be a normal $k$-current, with $\tau$ unit $k$-vector at $\mu$-a.e. point. Let $x_0\in\supp T$ be such that
    \[
    \lim_{r\to 0} \fint_{B_r(x_0)} |\tau(x)-\tau_0|\, d\|T\|(x)=0,
    \]
    where $\tau_0=\tau(x_0)$ is a unit $k$-vector.
    Let $V$ be the $k$-plane given by Lemma \ref{lemma:decomposition} applied to $\tau_0$, and let $f=f_{x_0,V}$ be the corresponding projection defined in \eqref{eq:projection_definition}. Then $0$ is a Lebesgue point of $f_* T$, namely, writing $f_*T=\tilde \tau \|f_*T\|$, it holds
    \begin{equation}
        \lim_{r\to 0} \fint_{B_r(0)} |\tilde \tau(y)-\tilde\tau(0)|\, d\|f_*T\|(y)=0.
    \end{equation}
\end{lemma}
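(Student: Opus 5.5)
We identify $\tilde\tau(0)$ with the unit $k$-vector $\sigma_0:=e_1\wedge\ldots\wedge e_k$, assuming (as we may, after flipping $e_1$) that $\tau_{1,\ldots,k}=\tau_0^V>0$. The overall idea is that the defect measure $\|f_*T\|-\langle f_*T,\sigma_0\rangle$ is small in $B_r(0)$ compared with $\|f_*T\|(B_r(0))$, and that this forces $\tilde\tau$ to be close to $\sigma_0$ on average. Here $\sigma_0$ is paired with the form $dx_1\wedge\ldots\wedge dx_k$.

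First, for $|\tilde\tau|=1$ and $|\sigma_0|=1$ we have the elementary identity
\[
|\tilde\tau-\sigma_0|^2=2\bigl(1-\langle\tilde\tau,\sigma_0\rangle\bigr).
\]
By Cauchy--Schwarz it therefore suffices to show
\[
\fint_{B_r(0)}\bigl(1-\langle \tilde\tau,\sigma_0\rangle\bigr)\,d\|f_*T\|\to 0.
\]
Equivalently, we must show
\[
\|f_*T\|(B_r(0))-\int_{B_r(0)}\langle\tilde\tau,\sigma_0\rangle\,d\|f_*T\| = o\bigl(\|f_*T\|(B_r(0))\bigr).
\]

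Next we compute the second term via Lemma \ref{lemma:pushforward}. Since $\int_{B_r(0)}\langle\tilde\tau,\sigma_0\rangle\,d\|f_*T\|$ is the action of $f_*T$ on $\1_{B_r(0)}\,dx_1\wedge\ldots\wedge dx_k$, we approximate the indicator by cutoffs $\phi$ (at generic radii, where $\|f_*T\|(\partial B_r)=0$). The pairing $\langle dx_1\wedge\ldots\wedge dx_k, df|_x[\tau(x)]\rangle$ only sees the $V$-component of $\tau(x)$, because any factor in $V^\perp$ is sent to a multiple of $e_{k+1}$. This component equals $\tau(x)^V$, so we get
\[
\int_{B_r(0)}\langle\tilde\tau,\sigma_0\rangle\,d\|f_*T\|=\int_{B_r(x_0)}\tau(x)^V\,d\|T\|(x)\ \ge\ \tau_0^V\|T\|(B_r(x_0))-\omega(r)\|T\|(B_r(x_0)),
\]
using $f^{-1}(B_r(0))=B_r(x_0)$.

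The first term must be bounded from above by the same quantity up to $o(\|T\|(B_r(x_0)))$. The key fact is that $|df|_x[\tau(x)]|\le |df|_x[\tau_0]|+C|\tau(x)-\tau_0|$, since $df$ has operator norm at most $1$ on $k$-vectors. By \eqref{eq:projection_of_tau} in Lemma \ref{lemma:decomposition}, $df|_x[\tau_0]=\tau_0^V\sigma_0$ for every $x$, so $|df|_x[\tau_0]|=\tau_0^V$. This is exactly where the adapted choice of $V$ enters: for any other $V$, the components with $k-1$ indices in $V$ would survive with varying directions $e_{k+1}$, and $|df_x[\tau_0]|$ could exceed the $\sigma_0$-part. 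Since $\|f_*T\|\le f_\#(|df[\tau]|\,\|T\|)$, we obtain
\[
\|f_*T\|(B_r(0))\le \tau_0^V\|T\|(B_r(x_0))+C\omega(r)\|T\|(B_r(x_0)).
\]
Subtracting the two estimates shows that the numerator is at most $C\omega(r)\|T\|(B_r(x_0))$.

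Finally, we divide by $\|f_*T\|(B_r(0))$. By \eqref{eq:mass_comparison_ratio} in Lemma \ref{lemma:mass_comparison}, this denominator is at least $(\tau_0^V/2)\|T\|(B_r(x_0))$ for small $r$, and this lower bound is positive since $x_0\in\supp T$. The average thus tends to $0$. The resulting limit is $\sigma_0$, so we also define $\tilde\tau(0):=\sigma_0$ in the Lebesgue-point sense.

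The main obstacle is rigorously justifying $\|f_*T\|\le f_\#(|df[\tau]|\,\|T\|)$ for the merely Lipschitz $f$, including on $x_0+V$. We will try to obtain it directly from the representation formula of Lemma \ref{lemma:pushforward}, by testing against arbitrary forms $\omega$ with $|\omega|\le1$. This requires the comass/mass convention to be used consistently, and we would use the Euclidean mass norm throughout to keep the constant $C$ dimensional.
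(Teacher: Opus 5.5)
Your argument is correct, but it is organized differently from the paper's.

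\emph{The paper's route.} It freezes the orientation and compares $f_*T$ with $T'=\tau_0^V e_1\wedge\ldots\wedge e_k\,f_\#\|T\|$. It uses \eqref{eq:projection_of_tau} to recognize the $\tau_0$-part of the pushforward formula as exactly $\langle T',\omega\rangle$, and bounds $\|f_*T-T'\|(B_r(0))\le\int_{B_r(x_0)}|\tau-\tau_0|\,d\|T\|$. It then renormalizes the non-unit density of $f_*T$ with respect to $f_\#\|T\|$ via Lemma \ref{lemma:almost_Lebesgue}.

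\emph{Your route.} You run a mass-versus-flux (excess) argument. You bound the mass from above, $\|f_*T\|(B_r(0))\le(\tau_0^V+\omega(r))\|T\|(B_r(x_0))$, and the $\sigma_0$-flux from below by $(\tau_0^V-\omega(r))\|T\|(B_r(x_0))$. The identity $|\tilde\tau-\sigma_0|^2=2(1-\langle\tilde\tau,\sigma_0\rangle)$ then turns small excess into $L^1$-closeness of the polar.

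\emph{What each buys.} Your route works directly with the polar and mass measure of $f_*T$, so neither the non-unit density nor Lemma \ref{lemma:almost_Lebesgue} is needed. It also isolates neatly that the adapted plane from Lemma \ref{lemma:decomposition} is used only in the upper mass bound; the flux bound holds for every $V$. The cost is twofold. You rely on a Hilbert-norm identity, which is harmless here: every $k$-vector in $\R^{k+1}$ is simple, so mass and Euclidean norms agree and $\tilde\tau$ is Euclidean-unit. And you get the rate $O\bigl(\sqrt{\omega(r)/\tau_0^V}\bigr)$, whereas the paper's rate is linear in $\omega(r)$.

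\emph{On the step you flag.} The step you call the main obstacle is immediate. Test Lemma \ref{lemma:pushforward} against $\omega\in\D^k(B_r(0))$ of comass at most $1$ and use $f^{-1}(B_r(0))=B_r(x_0)$. This gives $\|f_*T\|(B_r(0))\le\int_{B_r(x_0)}|df|_x[\tau(x)]|\,d\|T\|$, which is all you actually use. You may also take $C=1$: $df|_x$, and on $x_0+V$ the map $\tau\mapsto\tau^V e_1\wedge\ldots\wedge e_k$, has norm at most $1$ on $k$-vectors.
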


\begin{proof}
    The idea is to compare the projected current with the current obtained by projecting the frozen orientation $\tau_0$, showing that they are infinitesimally equivalent.
    Let us write
    \[
    \tau_0=\tau_0^V e_1\wedge\ldots\wedge e_k+\tilde \tau_0, \qquad\text{$\tilde\tau_0\perp e_1\wedge \ldots\wedge e_k$},
    \]
    and let us assume without loss of generality that $\tau_0^V>0$. 
    We claim that the currents $\tilde T:=f_* T=\tilde\tau\tilde\mu$ and $T':=(f_*\tau_0)\tilde\mu=\tau_0^V e_1\wedge\ldots\wedge e_k\, \tilde \mu$ satisfy
    \begin{equation}\label{eq:almost_Lebesgue}
    \lim_{r\to 0}\frac{\|T'-\tilde T\|(B_r(0))}{\tilde \mu(B_r(0))}=0.
    \end{equation}
    To show this we start from 
    \begin{align*}
        \|T'-\tilde T\|(B_r(0)) &= \sup \{\langle T'-\tilde T,\omega\rangle :\, \omega\in \D^k(B_r(0)),\,\|\omega\|\le 1\}.
    \end{align*}
    Let us fix $\omega$ with support in $B_r(0)$ and let us expand $\langle T',\omega\rangle $ and $\langle \tilde T,\omega\rangle $ separately. Using Lemma \ref{lemma:pushforward}, and the fact that $df|_x$ is a linear map for every $x$, we obtain
    \begin{align*}
        \langle \tilde T,\omega\rangle &= \langle f_*T,\omega\rangle \\
        &=\int \langle \omega(f(x)),df|_x \tau(x)\rangle \, d\|T\|(x)\\
        & =\int \langle \omega(f(x)),df|_x [\tau(x)-\tau_0]\rangle \, d\|T\|(x)+\int \langle \omega(f(x)),df|_x \tau_0\rangle \, d\|T\|(x).
    \end{align*}
    Using that $df|_x$ is $1$-Lipschitz, the modulus of the first integral is estimated by
    \[
    \|\omega\|_{C^0} \int_{B_r(x_0)} |\tau(x)-\tau_0|\, d\|T\|(x).
    \]
    The second integral instead coincides with $\T'$, since $df|_x\tau_0=\tau_0^V$ for every $x\in\R^n$, by virtue of \eqref{eq:projection_of_tau} (this is the only point where it is crucial to use the specific plane $V$ given by Lemma \ref{lemma:decomposition}, see Remark \ref{rmk:projection}).
    Hence
    \begin{align*}
        |\langle T'-\tilde T,\omega \rangle|\le \|\omega\|_{C^0}\int_{B_r(x_0)} |\tau(x)-\tau_0|\, d\|T\|(x).
    \end{align*}
    Taking the supremum among all $\omega\in \D^k(B_r(0))$ with $\|\omega\|_{C^0}\le 1$, and using \eqref{eq:mass_comparison_ratio}, we find that
    \[
    \frac{\|\tilde T-T'\|(B_r(0))}{\tilde \mu(B_r(0))}\le\frac{\mu(B_r(0))}{ \tilde\mu(B_r(x_0))}\fint_{B_r(x_0)} |\tau(x)-\tau_0|\, d\|T\|(x) \to 0.
    \]
    This proves \eqref{eq:almost_Lebesgue}. Observe that this is almost the desired Lebesgue property, with $\tilde\tau(0)=\tau_0^V e_1\wedge\ldots\wedge e_k$. The only difference is that the measure $\tilde \mu$ is only a multiple of the mass measure of $f_*T$, and $\tilde \tau$ is not necessarily unit. To reach the conclusion it is sufficient to apply the following elementary lemma.
\end{proof}

\begin{lemma}[Unit vs. non-unit approximate continuity]\label{lemma:almost_Lebesgue}
    Let $T=v\sigma$ be a $k$-current (with $v$ not necessarily unit) and suppose that, for some $k$-vector $v_0\ne 0$,
    \[
    \lim_{r\to 0}\fint_{B_r(x_0)} |v(x)-v_0|\, d\sigma(x)=0.
    \]
    Then $x_0$ is a Lebesgue point of $T$, and more precisely
    \[
    \lim_{r\to 0}\fint_{B_r(x_0)} \left|\frac{v(x)}{|v(x)|}-\frac{v_0}{|v_0|}\right|\, |v(x)| d\sigma(x)=0.
    \]
\end{lemma}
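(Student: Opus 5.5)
The plan is to compare the unit vector $v/|v|$ with $v_0/|v_0|$ pointwise, weighted by $|v|$, and reduce everything to the hypothesis $\fint|v-v_0|\,d\sigma\to0$. For $\sigma$-a.e.\ $x$ with $v(x)\ne 0$ we have the elementary inequality
\[
\left|\frac{v}{|v|}-\frac{v_0}{|v_0|}\right||v| = \left| v-\frac{|v|}{|v_0|}v_0\right| \le |v-v_0| + \left|1-\frac{|v|}{|v_0|}\right||v_0| = |v-v_0|+\big||v_0|-|v|\big| \le 2|v-v_0|.
\]
Points where $v(x)=0$ contribute nothing to the weighted integral, since there the integrand is multiplied by $|v(x)|=0$. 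Integrating over $B_r(x_0)$ therefore gives
\[
\int_{B_r(x_0)}\left|\frac{v}{|v|}-\frac{v_0}{|v_0|}\right||v|\,d\sigma\le 2\int_{B_r(x_0)}|v-v_0|\,d\sigma .
\]

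The remaining point is the normalization. The averaged statement in the conclusion should be read with respect to the mass measure $\|T\|=|v|\sigma$; this is what makes it the Lebesgue property for $T$, since $T=\frac{v}{|v|}\,\|T\|$. We therefore need the lower bound $\int_{B_r(x_0)}|v|\,d\sigma\gtrsim\sigma(B_r(x_0))$. This follows from
\[
\int_{B_r(x_0)}|v|\,d\sigma\ \ge\ |v_0|\,\sigma(B_r(x_0))-\int_{B_r(x_0)}|v-v_0|\,d\sigma=\sigma(B_r(x_0))\big(|v_0|-o(1)\big),
\]
so for small $r$ the two averages differ by at most the factor $2/|v_0|$. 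Combining this with the previous display yields the limit $0$, whether the average is taken with respect to $\sigma$ or to $\|T\|$.

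The only step requiring care is this comparison of normalizations, and it is exactly where $v_0\ne0$ is used. The same lower bound also shows that $x_0\in\supp\|T\|$. When the lemma is applied in Lemma \ref{lemma:preservation_Lebesgue} with $\sigma=\tilde\mu$ and $v_0=\tau_0^V e_1\wedge\ldots\wedge e_k$, the hypothesis holds because of \eqref{eq:almost_Lebesgue}. Indeed, writing $\tilde T=\tilde v\tilde\mu$, we have $\|\tilde T-T'\|(B_r(0))=\int_{B_r(0)}|\tilde v-v_0|\,d\tilde\mu$.
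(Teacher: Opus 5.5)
Your proof is correct and is essentially the paper's: the same pointwise bound $\bigl|v-\frac{|v|}{|v_0|}v_0\bigr|\le |v-v_0|+\bigl||v_0|-|v|\bigr|\le 2|v-v_0|$, integrated against $\sigma$. Your constant $2$ is the right one, since the paper's final constant $\frac{1}{|v_0|}+1$ only dominates it when $|v_0|\le 1$, as in its application; your extra step $\|T\|(B_r(x_0))=\int_{B_r(x_0)}|v|\,d\sigma\ge(|v_0|-o(1))\,\sigma(B_r(x_0))$ usefully makes explicit the change of normalization from $\sigma$ to the mass measure, which the paper leaves implicit when it calls $x_0$ a Lebesgue point of $T$.
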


\begin{proof}
    By triangle inequality
    \begin{align*}
        \left|\frac{v(x)}{|v(x)|}-\frac{v_0}{|v_0|}\right|\, |v(x)|& =\frac{1}{|v_0|} \big|v(x)|v_0|-v_0|v(x)| \big|\\
        &= \frac{1}{|v_0|} \big|v(x)|v_0|-v_0|v_0|+v_0|v_0|-v_0|v(x)| \big|\\
        &\le \frac{1}{|v_0|} |v_0|\,|v(x)-v_0|+\frac{1}{|v_0|}|v_0|\,\big||v_0|-|v(x)| \big|\\
        &\le \left(\frac{1}{|v_0|}+1\right)|v(x)-v_0|.
    \end{align*}
    The thesis follows by integrating in $\sigma$.
\end{proof}

\begin{remark}\label{rmk:projection}
    For the conclusion of Lemma \ref{lemma:preservation_Lebesgue} to hold, it is crucial that the plane $V$ is chosen so that \eqref{eq:projection_of_tau} is satisfied. For instance, consider $T$ to be a $1$-current that, in the ball $B_1(0)$, coincides with $v\H^1\restrict \ell_v$ for some $v$, where $\ell_v$ is the span of $v$. If $V$ is chosen to be any line different from $\ell_v$, then the projected current $(f_V)_*T$ has a corner at the origin, and can not have an approximate continuity point there.
\end{remark}

\subsection{Proof of the boundary rectifiability}

We write $\partial S=T=\tau \|T\|$, with $\tau$ unit $k$-vector field. Let us consider a point $x_0\in \supp(T)$ such that
\begin{equation}\label{eq:upper_density_T}
    \Theta^{*,k}(\|T\|,x_0)<\infty
\end{equation}
and
\[
\lim_{r\to 0}\fint_{B_r(x_0)} |\tau(x)-\tau_0|\, d\|T\|(x)=0
\]
for some unit $k$-vector $\tau_0$. By standard differentiation theorems, and since $\|T\|\ll \H^k $ for every normal $k$-current $T$ (see Chapter 6, Theorem~2.43 in \cite{Simon}), $\|T\|$-a.e. point satisfies both properties. Lemma \ref{lemma:decomposition} provides us with an orthonormal basis $e_1,\ldots,e_n$ (that we will assume without loss of generality to be the standard basis of $\R^n$) such that \eqref{eq:decomposition_tau} is satisfied.

Let us set $V:=\mathrm{span}\{e_1,\ldots,e_k\}$, and let us consider the cylindrical projection $f=f_{x_0,V}:\R^n\to \R^{k+1}$ defined by
\[
f(x)=\big( (x-x_0)\cdot e_1,\ldots, (x-x_0)\cdot e_k, \dist(x,x_0+V)\big).
\]
Observe that $f$ is $1$-Lipschitz, and thus $\tilde S:=f_*S$ is a well-defined integer multiplicity normal $(k+1)$-current in $\R^{k+1}$, supported on $\R^k\times [0,\infty)$. Hence $\tilde S$ is represented by a $\Z$-valued $BV$ function $u$:
\[
\tilde S= u e_1\wedge\ldots \wedge e_{k+1} \Lcal^{k+1},\qquad u\in BV(\R^{k+1},\Z), \quad \supp (u)\subseteq \R^k\times[0,\infty).
\]
Moreover, $\tilde T:=f_*T$ is a well-defined normal $k$-current in $\R^{k+1}$, such that $\partial \tilde S=\tilde T$, and thus is identified with $Du=\nu_u|Du|$:
\[
f_* T=\tilde \tau|Du|\qquad\text{and}\qquad \|f_* T\|=|Du|,
\]
where $\tilde\tau=v_1\wedge\ldots\wedge v_k$, with $v_1,\ldots, v_k$ spanning $\nu_u^\perp$, and such that $\tilde\tau\wedge \nu_u=e_1\wedge\ldots\wedge e_{k+1}$.
By Lemma \ref{lemma:preservation_Lebesgue}, $0$ is an approximate continuity point of $f_* T$, or equivalently, an approximate continuity point of $\nu_u=\frac{Du}{|Du|}$ with respect to $|Du|$. 
Moreover, since by $1$-Lipschitzianity $\|f_*T\|(B_r(0))\le\|T\|(B_r(x_0))$, from \eqref{eq:upper_density_T}  we deduce also that
\[
\Theta^{*,k}(|Du|,0)=\Theta^{*,k}(\|f_*T\|,0)\le \Theta^{*,k}(\|T\|,x_0)<\infty.
\]
We are now in the position to apply De Giorgi's Theorem \ref{thm:blowup} to $u$. We find that
\[
\frac{1}{r^k}(Z_{0,r})_\#\|f_*T\|=\frac{1}{r^k}(Z_{0,r})_\#|Du|\weaksto m\H^{n-1}\restrict f(V)
\]
for some $m\in\N\setminus\{0\}$, and moreover that $\Theta^k(\|f_*T\|,0)=m$. In particular, again by 1-Lipschitzianity, this implies that
\begin{equation}\label{eq:positive_lower_density}
\Theta_*^k(\|T\|,x_0)\ge \Theta_*^k(\|f_*T\|,0)=m.
\end{equation}
It follows that, for every $L>0$, $X(0,f(V),L)$ is an approximate tangent cone at $0$ for $\|f_* T\|=|Du|$. By Corollary \ref{cor:cones_under_projection}, $X(x_0,V,L)$ is an approximate tangent cone of $\|T\|$ at $x_0$ in the sense of \eqref{eq:tangent_cone}, and taking into account \eqref{eq:positive_lower_density} this means that
\[
\lim_{r\to 0}\frac{\|T\| (B_r(x_0)\setminus X(x_0,V,L))}{r^k}=0.
\]
As this holds for $\|T\|$-almost every point, we conclude that $T$ is $\H^k$-rectifiable, hence of the form $T=g\tau \H^k\restrict E$, for some $\H^k$-rectifiable set $E$, some unit $k$-vector field $\tau$ orienting $E$, and some multiplicity $g$ that we can assume to be non-negative.

We are only left to show that $g$ is integer-valued $\|T\|$-almost everywhere. Now we can apply again Lemma \ref{lemma:mass_comparison}, but with the gained knowledge that for $\|T\|$-almost every $x_0$ we can take $\tau_0^V=\tau(x_0)$, because $\tau$ is simple $\|T\|$-almost everywhere. Hence we deduce
\[
\lim_{r\to 0} \frac{\|f_*T\|(B_r(0))}{\|T\|(B_r(x_0))}=1.
\]
On the other hand, by Theorem \ref{thm:blowup} we know that $\Theta^k(\|f_*T\|,0)=m$, and this implies that $\Theta^k(\|T\|,x_0)=m\in \mathbb{N}$, 
whence the density is integer-valued $\|T\|$-almost everywhere. \hfill\qedsymbol

\section{Compactness of integral currents}\label{sec:compactness}

We now come to the proof of Theorem \ref{thm:compactness}, showing how compactness follows from the boundary rectifiability theorem, proceeding by induction on the \textit{codimension}. The base case $k=n$ is simply the compactness for $\Z$-valued $BV$ functions. Suppose then that $1\le k\le n -1$ and that $T_j\weaksto T$. The idea is the following:
\begin{enumerate}
    \item If $\partial T_j=0$ then we can use the isoperimetric inequality to find $(k+1)$-currents $S_j$ with equibounded masses and such that $\partial S_j=T_j$. By the induction hypothesis $S_j$ converge to an integral current $S$, hence also $T=\partial S$ is integral and we are done.
    \item If $\partial T_j\ne 0$, then we convert them into boundaryless currents by adding a cylindrical correction term in one higher dimension and projecting it back through a slanted projection (see Figure \ref{fig:cylinder}). We can then apply a similar induction argument to show that the ``filled-in'' currents converge to an integral current, and finally exploit the directional freedom in the slanted projection to rule out cancellations between the original and the added parts. This shows that $T$ itself is an integer rectifiable current, hence by the boundary rectifiability also integral.
\end{enumerate}
We now turn to the actual proof.

\subsection{Some reductions}
We are going to give the proof under the additional assumption that there exists an $R>0$ such that
\[
\supp (T_j)\subseteq B(0,R)\qquad \text{for every $j$.}
\]
The general case can be reduced to this for instance by projecting all currents $T_j$ to $B(0,R)$, applying the compactness in this case, and sending $R\to\infty$ to deduce compactness of the original sequence, since the projection coincides with the identity inside $B(0,R)$. Alternatively, one can adapt the proof below with minor modifications. 
We can also assume, using the compactness theorem for normal currents, that $T_j$ already converge to some normal $k$-current $T$. Moreover, we restrict to $k\ge 1$ as the case $k=0$ is elementary.\\

\subsection{Proof of the compactness theorem}
Let $1\le k\le n$.\\

\underline{\textit{Case $k=n$}}. In this case the statement follows from the compactness theorem for $\Z$-valued $BV$ functions. Indeed, $T_j$ can be identified with a $\Z$-valued $BV$ function $u_j$, for which $\Msf(T_j)=\|u\|_{L^1}$, $\Msf(\partial T_j)=|Du_j|(\R^n)$, thus
\[
\|u_j\|_{L^1}+|Du_j|(\R^n)\le C.
\]
The standard compactness in $BV$ (see \cite[Theorem~3.23]{AFP}) yields that, up to subsequences, $u_j\to u$ in $L^1_{loc}$ for some $u\in BV$, and hence that $T_j\weaksto T$ where $T$ is the $k$-current associated with $u$. Passing to a subsequence converging almost everywhere, we discover that $u$ must also be $\Z$-valued, i.e., that $T$ is integral.\\

\underline{\textit{Case $k+1\implies k$}}. Assume now that we have proven the theorem for sequences of integral $(k+1)$-currents. We want to show that it also holds for $k$-currents. The boundaryless case follows almost directly from the inductive assumption: if $\partial T_j=0$, then we use the isoperimetric inequality given by Lemma \ref{lemma:isoperimetric_inequality} to obtain a sequence of integral $(k+1)$-currents $S_j$, supported in $B(0,R)$, with $\partial S_j=T_j$, and such that
\[
\sup_j \Msf(S_j)+\Msf(\partial S_j)<\infty.
\]
Applying the compactness theorem in dimension $k+1$ we obtain that, up to subsequences, 
\[
S_j\weaksto S
\]
for some integral $(k+1)$-current $S$. Therefore $\partial S$ is integral as well. It follows that
\[
T_j=\partial S_j\weaksto \partial S=T,
\]
which shows the conclusion.

In the general case, we only need to find a way to ``fill the holes'' to make the $T_j$'s boundaryless, applying the same argument in one higher dimension, and argue that the added parts do not interfere in the limiting process, thus concluding that $T$ is integer rectifiable. To this aim, let us consider the following cylindrical $k$-currents in $\R^n\times \R$ (see Figure \ref{fig:cylinder}):
\begin{equation}\label{eq:boundary_explicit}
\partial (T_j\times \curr{0,1})=\partial T_j \times\curr{0,1} +T_j\times (\delta_1-\delta_0).
\end{equation}
Given a vector $v\in \R^n$, we define the slanted projection $\pi_v:\R^n\times \R\to \R^n$
\[
\pi_v((x,x_{n+1})):=x+vx_{n+1},\qquad x\in\R^n,\, x_{n+1}\in \R.
\]
Observe that $\pi_v$ is the identity on the horizontal slice $\R^n\times\{0\}$, and sends the vector $(0^n,1)$ to $v$.
For a fixed $v\in\R^n$, we now consider the sequence of integral $k$-currents
\begin{equation}\label{eq:tildeT_j_definition}
\widetilde T_j:=(\pi_v)_*(\partial (T_j\times \curr{0,1})).
\end{equation}
Observe that $\partial \widetilde T_j=0$, and that
\[
\M(\widetilde T_j)\le |v|\M(\partial T_j)+2\M(T_j).
\]
By the isoperimetric inequality of Lemma \ref{lemma:isoperimetric_inequality} there exist integral $(k+1)$-currents $S_j$ such that $\partial S_j=\widetilde T_j$ and
\[
\M(S_j)\lesssim \M(\widetilde T_j)\lesssim \M(T_j)+\M(\partial T_j).
\]
Moreover $\M(\partial S_j)=\M(\widetilde T_j)$,
so $S_j$ is a sequence of integral $(k+1)$-currents, supported in $B(0,R+|v|)$, with equibounded mass and mass of the boundary. By the inductive step we deduce that they converge, up to subsequence, to an integral $(k+1)$-current $S$. In particular $\partial S_j\weaksto \partial S$, which is an integral $k$-current.  From \eqref{eq:boundary_explicit} and \eqref{eq:tildeT_j_definition}, sending $j\to\infty$ we find
\begin{equation}\label{eq:three_pieces}
\partial S=(\pi_v)_*(\partial T\times \curr{0,1})+T-(\Tcal_v)_*T,
\end{equation}
where $\Tcal_v$ denotes the translation by the vector $v$.
From the information that $\partial S$ is integer rectifiable we now want to argue that also $T$ is integer rectifiable. The obstacle to this might come from possible cancellations between the three terms at the right-hand side of \eqref{eq:three_pieces}. However:
\begin{enumerate}
    \item the interaction between $T$ and $(\Tcal_v)_*T$ can be eliminated by choosing $|v|$ large enough, as their supports become disjoint;
    \item the current $R_v:=(\pi_v)_*(\partial T\times \curr{0,1})$ is \textit{ruled} in direction $v$, namely its orienting $k$-vector is of the form $v\wedge \tau'$ for some $\tau'$; hence, at worst,  it can  cancel only those pieces of $T$ where $v\in \mathrm{span}(\tau)$. Exploiting the freedom to choose $v$ in any direction, we also rule out cancellations between $R_v$ and $T$.
\end{enumerate}
%We want to use the arbitrariness of $v$ to show that $R$ does not interfere too much with $T$, and thus cannot cancel any non-rectifiable piece.
\begin{figure}
    \centering
    \def\svgwidth{0.5\columnwidth}
    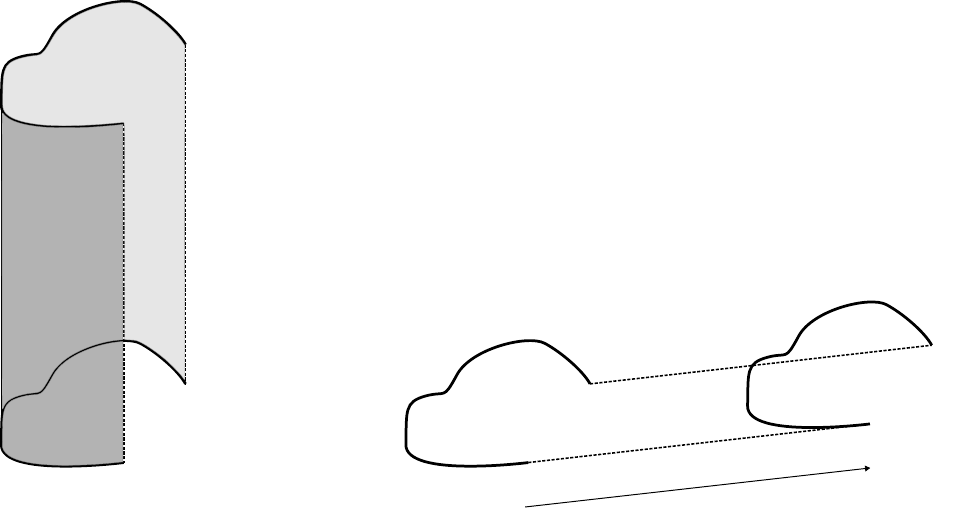

    \caption{\small Cylinder construction. On the left the current $T\times\curr{0,1}$. On the right the projected current $(\pi_v)_*(\partial (T\times \curr{0,1}))$, which is made of three pieces: $T$ and $(\mathcal{T}_v)_*T$ (thick line) and $(\pi_v)_*(\partial T\times \curr{0,1})$ (dashed line).}
    \label{fig:cylinder}
\end{figure}
All in all this implies that the cancellations among these terms are negligible, and that $T$ itself must be integer rectifiable.
We formalize this in the following. Let us first prove that $T$ is concentrated on a rectifiable set. From \eqref{eq:three_pieces} and $R_v\wedge v=0$ it follows that $(T-(\Tcal_v)_*T)\wedge \frac{v}{|v|}$ is concentrated on a rectifiable set, for every choice of $v\ne 0$. By choosing $|v|$ large enough the supports of $T$ and $(\Tcal_v)_*T$ become disjoint, so we also deduce that, for every $e\in \R^n$, $T\wedge e$ is concentrated on a rectifiable set. This is enough to conclude that $T$ itself is concentrated on a rectifiable set, using for instance the inequality
\begin{equation}\label{eq:mass_inequality}
    \|T\|\le \frac{1}{\sqrt{n-k}} \sum_{i=1}^n \|T\wedge e_i\|
\end{equation}
valid for $k\le n-1$. Indeed, writing $T=\tau\mu$, and $\tau(x)=\sum_{\ibf} \tau_\ibf(x) e_\ibf$, for every $i\in\{1,\ldots,n\}$ we have
\[
|\tau(x)\wedge e_i|^2=\sum_{\ibf:\, i\not\in \ibf} |\tau_\ibf(x)|^2
\]
hence
\[
\sum_{i=1}^n |\tau(x)\wedge e_i|^2=\sum_{i=1}^n \sum_{\ibf:\, i\not\in \ibf} |\tau_\ibf(x)|^2=(n-k)\sum_\ibf |\tau_\ibf(x)|^2
\]
since for every $k$-index $\ibf$ there are $n-k$ different indices $i$ not belonging to $\ibf$. Taking the square roots leads to \eqref{eq:mass_inequality}.

From now on, we call $E$ a fixed rectifiable set on which $T$ is concentrated. Observe also that, by a standard blowup argument (see \cite[Lemma~2.2]{White}), the orientation of $T$ must agree with the orientation of the approximate tangent space of $E$ at $\|T\|$-a.e. point.

Now for the integer multiplicity. Let us fix $v\in \R^n$, and let $E_v$ be the set of points in $E$ such that $v\not\in\Tan(E,x)$. Since $R_v$ is supported on a rectifiable set $F_v$ whose tangent space at every point instead contains $v$, it follows that $\H^k(E_v\cap F_v)=0$. Here we are exploiting the fact that two $\H^k$-rectifiable sets $E_v$ and $F_v$ of finite measure have the same approximate tangent space at $\H^k$-a.e. point of $E_v\cap F_v$.
Hence the multiplicity of $T$ at $\H^k$-a.e. point of $E_v$ is the same as the one of $\partial S$, i.e., an integer. As $v$ varies in a countable, dense set of directions, the sets $E_v$ cover $\H^k$-almost all $E$, hence we deduce that the multiplicity of $T$ is an integer at $\H^k$-almost every point of $E$. This shows that $T$ is integer rectifiable.

Finally, since $T$ is also normal, by applying the boundary rectifiability theorem we deduce that $T$ is integral.\hfill\qedsymbol

\appendix

\section{De Giorgi's theorem for \texorpdfstring{$\Z$}{Z}-valued \texorpdfstring{$BV$}{BV} functions}\label{sec:DeGiorgi}
This section is devoted to the proof of (a version of) De Giorgi's theorem for $\Z$-valued $BV$ functions. De Giorgi first showed that for a finite perimeter set $E$, at every approximate continuity point of the polar of $D\1_E$, the set $E$ blows up to a half-space $H$, and the measure $|D\1_E|$ blows up to $\H^{n-1}\restrict\partial H$. The goal of this section is to extend this statement to $\Z$-valued $BV$ functions, at least under some additional assumptions that will be sufficient for our purposes (see Theorem \ref{thm:blowup}). The proof follows very closely the original one as can be found for instance in \cite{AFP}, but we decided to include it as we did not find any reference for the $\Z$-valued case in the literature, and moreover there are some subtleties not present in the original proof, for instance to show uniqueness of the blowup in Proposition \ref{prop:uniqueness}. 

Let $u\in BV(\R^n,\mathbb{Z})$. A point $x_0$ in the support of $|Du|$ is called an approximate continuty point of the polar if
\begin{equation}\label{eq:nu_definition}
\lim_{r\to 0}\frac{Du(B_r(x_0))}{|Du|(B_r(x_0))}=\nu\qquad\text{for some $\nu\in\S^{n-1}$.}
\end{equation}
Given $\nu\in\S^{n-1}$, we denote by $H_\nu$ the half space 
\[
H_\nu:=\{x\in \R^n:\, x\cdot \nu\ge 0\}.
\]
Finally, given $x\in \R^n$ and $r>0$, we denote by $Z_{x,r}:\R^n\to \R^n$ the zooming map
\[
Z_{x,r}(y):=\frac{y-x}{r}
\]
and set $u_{x,r}(y):=u(x+ry)$.
The following is the main result of this section. Observe that in the applications arising from the cylindrical projection, the projected current is supported on one side of the tangent hyperplane, hence why we also require that $u$ is supported on a half-space. It is likely that such condition may be weakened.

\begin{theorem}[Blowup for $\Z$-valued $BV$ functions]\label{thm:blowup}
    Let $u\in BV(\R^n,\mathbb{Z})$. Let $x_0$ satisfy \eqref{eq:nu_definition}, and suppose that, for the same $\nu$, $\supp u\subseteq x_0+H_\nu$. Suppose further that    \begin{equation}\label{eq:density_assumption}
        \Theta^{*,n-1}(|Du|,x_0)<\infty.
    \end{equation}
    Then there exists a non-zero integer $m$ such that:
    \begin{enumerate}[{\rm (i)}]
        \item     
        $u_{x_0,r}\to m\1_{H_\nu}$ in $L^1_{\loc}$ as $r\to 0$.
    
        \item 
    \[
        |Du_{x_0,r}|=\tfrac{1}{r^{n-1}}(Z_{x_0,r})_\# |Du|\weaksto |m|\H^{n-1}\restrict \partial H_\nu\qquad\text{as $r\to 0$}.
    \]
    \item The density $\Theta^{n-1}(|Du|,x_0)$ exists and equals $|m|$.
    \end{enumerate}
\end{theorem}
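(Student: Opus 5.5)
We follow De Giorgi's blowup argument as presented in \cite{AFP}, with modifications to handle integer values. Without loss of generality take $x_0=0$. Write $u_r:=u_{0,r}$ and $\mu_r:=|Du_r|=r^{1-n}(Z_{0,r})_\#|Du|$.

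\textbf{Step 1: compactness.} By \eqref{eq:density_assumption}, $\mu_r(B_R)\le CR^{n-1}$ for all $R>0$ and all small $r$. Since $u_r$ vanishes outside $H_\nu$, a Poincaré inequality on half-balls gives $\|u_r\|_{L^1(B_R)}\le C(R)$. The $BV$ compactness theorem then yields, along any sequence $r_h\to 0$, a subsequence with:
\begin{itemize}
\item $u_{r_h}\to v$ in $L^1_{\loc}$, with $v\in BV_{\loc}(\R^n,\Z)$ and $\supp v\subseteq H_\nu$;
\item $\mu_{r_h}\weaksto \lambda$ for some Radon measure $\lambda\ge|Dv|$.
\end{itemize}

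\textbf{Step 2: identify $v$ up to a constant.} Approximate continuity \eqref{eq:nu_definition} gives $|Du_r|(B_R)-Du_r(B_R)\cdot\nu\to 0$ for every $R$. We first pass to the limit on radii $R$ with $\lambda(\partial B_R)=0$, using lower semicontinuity of total variation together with weak convergence of $Du_r(B_R)$. This shows that $|Dv|=\lambda=Dv\cdot\nu$, i.e.\ $Dv=\nu|Dv|$. Then $D_w v=0$ for every $w\perp\nu$, so $v(x)=g(x\cdot\nu)$ with $g$ a nondecreasing, $\Z$-valued function. Moreover $g=0$ on $(-\infty,0)$ because of the support condition.

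\textbf{Step 3: nontriviality and single jump.} To rule out $v\equiv0$, we use the standard lower density bound for the points we consider: $|Du|(B_r)\ge c\,r^{n-1}$. This comes from comparing $|Du|(B_r)$ with $|Du(B_r)|=\big|\int_{\partial B_r}u\,\nu_{B_r}\,d\H^{n-1}\big|$ and applying the isoperimetric inequality to the superlevel sets of $u$ in $B_r$, using integrality ($|u|\ge1$ on its support). Hence $\lambda(\overline{B_1})\ge c>0$.

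Next we show that $g$ has a single jump, located at $0$. The key idea is to use the equality $\lambda=|Dv|$ together with the monotonicity-type identity from De Giorgi's proof: $\lambda(B_R)=\omega_{n-1}R^{n-1}\cdot(\text{jump})$, which follows by comparing with the half-space competitor $\partial B_R\cap H$. Concretely, we write the identity $\int_{B_R}Dv=\int_{\partial B_R}v\,\nu_{B_R}\,d\H^{n-1}$ and the analogous identity for $u_r$; in the limit these give $\lambda(B_R)\le\int_{\partial B_R\cap H_\nu}|v|$-type bounds for a.e.\ $R$. For $v=g(x\cdot\nu)$ with jumps at levels $t_i\ge0$, the flux through $\partial B_R$ equals the sum of the jumps weighted by $\H^{n-1}(B_R\cap\{x\cdot\nu=t_i\})$, and this is strictly less than the total jump times $\omega_{n-1}R^{n-1}$ unless all $t_i=0$. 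A cleaner route is to require $0\in\supp\lambda$ at every scale: rescaling invariance of the blow-up class shows that blowups of blowups are again blowups, and a jump at some $t>0$ would violate the lower density bound at $0$ after further rescaling. Either argument gives $v=m\1_{H_\nu}$ with $m\in\Z\setminus\{0\}$ (in fact $m>0$ with our orientation) and $\lambda=|m|\H^{n-1}\restrict\partial H_\nu$.

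\textbf{Step 4: uniqueness of $m$ (main obstacle).} Different subsequences could a priori produce different $m$. To exclude this, note that the set of blowup values is connected: $r\mapsto\mu_r(B_1)$ is continuous up to countably many radii, and all cluster points are of the form $|m|\omega_{n-1}$ with $m$ an integer. Continuity arguments require some care here, since weak convergence alone does not control $\mu_r(B_1)$ at every $r$. We therefore use instead $r\mapsto Du_r(B_1)\cdot\nu$, which is close to $\mu_r(B_1)$ and, by the divergence theorem, equals $-\int_{\partial B_1}u_r\,\nu_{B_1}\cdot\nu$; this quantity depends continuously on $r$ in $L^1$-averaged form. A continuous function on $(0,r_0)$ whose cluster set as $r\to0$ lies in the discrete set $\{\omega_{n-1}|m|\}$ and is connected must converge, which fixes $m$. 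This gives (i) and (ii), and then (iii) follows since $\lambda(\partial B_1)=0$ yields $\mu_r(B_1)\to|m|\omega_{n-1}$.
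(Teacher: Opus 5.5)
Your Steps 1–2 and the nontriviality part of Step 3 are sound; they correspond to the paper's compactness, monodirectionality/monotonicity and positive lower density. Step 4 would also work \emph{if} every blowup were already known to be of the form $m\1_{H_\nu}$. The gap is exactly that claim, the ``single jump'' part of Step 3, and neither of your two arguments proves it. The Gauss--Green identity $Dv(B_R)=\int_{\partial B_R}v\,\nu_{B_R}\,d\H^{n-1}$ holds (for a.e.\ $R$) for every $BV$ function, so it imposes no constraint on $g$. There is no half-space competitor available, since $u$ minimizes nothing, and nothing yields $\lambda(B_R)=\omega_{n-1}R^{n-1}\cdot(\text{total jump})$. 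The lower density bound, passed to the limit, gives $|Dv|(\overline B_R)\ge cR^{n-1}$ for all $R>0$. This only says that $g(0^+)\neq 0$, i.e.\ that there is \emph{a} jump at $0$. The profile $g=\1_{(0,1)}+2\cdot\1_{[1,\infty)}$ satisfies everything you have derived: it is monotone and $\Z$-valued, has $Dv=\nu|Dv|$ and $\lambda=|Dv|$, and has the lower density at every scale. Rescaling it by $s\to0$ merely pushes the second jump to infinity, producing the admissible blowup $\1_{H_\nu}$ rather than a contradiction.

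Moreover, the single-jump claim is essentially equivalent to the uniqueness you try to prove in Step 4, so your argument is circular. A blowup with jumps at $0$ and at some $t>0$ gives, via dilations $s\to0$ and $s\to\infty$, the two distinct blowups $g(0^+)\1_{H_\nu}$ and $g(+\infty)\1_{H_\nu}$. Conversely, if two half-space blowups with different multiplicities coexist, continuity of $r\mapsto\fint_{B(\nu,\eps)}u_{0,r}$ produces blowups with non-integer average on $B(\nu,\eps)$, i.e.\ with several jumps. Once such blowups are allowed, the cluster values $Dv(B_1)\cdot\nu=\sum_i j_i\,\omega_{n-1}(1-t_i^2)_+^{(n-1)/2}$ fill an interval, and connectedness of the cluster set gives nothing.

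What is missing is an argument that uses integer-valuedness together with the \emph{direction} of monotonicity to forbid a transition between multiplicities. This is the content of the paper's Proposition~\ref{prop:uniqueness}. There one takes $M$ maximal among the values of blowups and a second blowup with value $|m|<M$. One then follows the averages of $u$ on the chain of tangent balls $B_i=\tau^iB(\nu,\tilde\eps)$. Finally one blows up at the last index before the averages exceed $M-\frac12$ on the way from the $m$-scale to the $M$-scale. The limit profile $g\le M$ then has $g_{B_1}\ge M-\frac34$. This forces $g=M$ on a positive fraction of the inner ball $B_1$, hence on all of $B_0$, which lies above $B_1$ in direction $\nu$. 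But also $g_{B_0}\le M-\frac14$, a contradiction. Only after uniqueness does $0$-homogeneity give $v=m\1_{H_\nu}$, and at that point your Step 4 becomes unnecessary.
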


We recall that the class of $\Z$-valued $BV$ functions in $\R^n$ coincides with the class of integral $n$-currents. Some of the following results can be seen in this light, but we decided to keep the exposition the simplest possible and thus for this section we will only refer to the theory of finite perimeter sets or $BV$ functions in $\R^n$. For instance, the lower density in Lemma \ref{lemma:positive_lower_density} could be obtained directly for the current $T$ (at $\|T\|$-a.e. point) by slicing and the isoperimetric inequality (see \cite[2.1]{White}).

\begin{proposition}[Global isoperimetric inequality]\label{prop:global_isoperimetric}
    There exists a constant $C_I$ such that for every $u\in BV(\R^n;\Z)$
    \[
    \|u\|_{L^1(\R^n)} \le C_I |Du|(\R^n)^\frac{n}{n-1}.
    \]
\end{proposition}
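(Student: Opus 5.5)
The plan is to reduce to the classical isoperimetric inequality for sets of finite perimeter via the coarea formula and the layer-cake representation. Every $u\in BV(\R^n;\Z)$ with $|Du|(\R^n)<\infty$ and $u\in L^1$ can be decomposed into superlevel sets. Write $u=u^+-u^-$. Since $u$ is integer valued,
\[
u^+=\sum_{j\ge 1}\1_{E_j},\qquad E_j:=\{u\ge j\},\qquad u^-=\sum_{j\ge 1}\1_{F_j},\qquad F_j:=\{u\le -j\}.
\]
These are decreasing families of sets. By the coarea formula for $BV$ functions, and again using that $u$ takes only integer values so that $\{u>t\}$ is constant for $t\in(j-1,j)$,
\[
|Du|(\R^n)=\int_{-\infty}^{\infty}P(\{u>t\})\,dt=\sum_{j\ge 1}P(E_j)+\sum_{j\ge 1}P(F_j).
\]
Each $E_j,F_j$ has finite Lebesgue measure, since $|E_j|\le \|u\|_{L^1}/j$.

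Next, apply the classical isoperimetric inequality $|E|^{\frac{n-1}{n}}\le C\,P(E)$, valid for sets of finite measure, to each level set:
\[
\|u\|_{L^1}=\sum_j |E_j|+\sum_j|F_j|\le C^{\frac{n}{n-1}}\Big(\sum_j P(E_j)^{\frac{n}{n-1}}+\sum_j P(F_j)^{\frac{n}{n-1}}\Big).
\]
Since $\frac{n}{n-1}\ge1$, the elementary superadditivity $\sum a_j^p\le(\sum a_j)^p$ for $a_j\ge0$ and $p\ge1$ gives the bound $C^{\frac{n}{n-1}}|Du|(\R^n)^{\frac{n}{n-1}}$. Hence $C_I=C^{n/(n-1)}$.

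This is the point where integrality is essential. For real-valued $u$ one only obtains the Sobolev inequality $\|u\|_{L^{n/(n-1)}}\lesssim|Du|$, not an $L^1$ bound by a power of $|Du|$. Here the layers have unit height, so summing $|E_j|$ rather than integrating works.

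The only delicate points are the following:
\begin{enumerate}
\item The case $n=1$, where $\frac{n}{n-1}$ is infinite. One interprets the statement for $n\ge2$. For $n=1$ the analogous bound is instead $\|u\|_{L^\infty}\le |Du|(\R)/2$, which is not needed here.
\item Justifying the coarea identity, which is standard (e.g. \cite{AFP}).
\item Making sure $u\in L^1$ is used only to guarantee that the level sets have finite measure. This is needed because the isoperimetric inequality fails for complements of bounded sets.
\end{enumerate}
I expect no real obstacle beyond these; the main step is the superadditivity trick combined with integer layering.
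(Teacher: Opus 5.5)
Your proof is correct, but it takes a different route from the paper's.

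\textbf{Your route.} You reduce to the isoperimetric inequality for sets. Integrality enters through the unit-height layer decomposition $|u|=\sum_{j\ge1}(\1_{E_j}+\1_{F_j})$. This makes both $\|u\|_{L^1}$ and, via coarea, $|Du|(\R^n)$ exact sums over the layers. Superadditivity of $t\mapsto t^{n/(n-1)}$ then finishes the argument.

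\textbf{The paper's route.} The paper uses neither coarea nor the isoperimetric inequality for sets. It applies Poincar\'e's inequality $\int_Q|u-\bar u_Q|\le C_P\ell(Q)|Du|(Q)$ on a partition of $\R^n$ into cubes of side $r_0=(2\|u\|_{L^1})^{1/n}$. This scale is chosen so that $|\bar u_Q|\le\frac12$ on every cube. Integrality enters only through the elementary fact that $|m-n|\ge\frac12|n|$ whenever $|m|\le\frac12$ and $n\in\Z$. That gives $\int_Q|u|\le 2C_Pr_0|Du|(Q)$, and summing over the cubes and inserting the value of $r_0$ yields the claim.

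\textbf{What each buys.} The paper's argument needs only Poincar\'e on cubes. Applied to $u=\1_E$, it even re-derives the isoperimetric inequality for sets, with a non-sharp constant. Yours is a clean reduction to the set case and gives the optimal constant $C_I=\gamma_n^{n/(n-1)}$, where $\gamma_n$ is the sharp isoperimetric constant. Equality holds for $u=\pm\1_B$ with $B$ a ball.

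\textbf{A shorter proof.} Your remark about real-valued functions gives one directly. Since $|u|$ takes values in $\{0,1,2,\dots\}$, one has $|u|\le|u|^{n/(n-1)}$ pointwise. So the $BV$ Sobolev inequality $\|u\|_{L^{n/(n-1)}}\le C|Du|(\R^n)$ already yields $\|u\|_{L^1}\le C^{n/(n-1)}|Du|(\R^n)^{n/(n-1)}$.
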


\begin{proof}
    We apply Poincar\'e's inequality (see Theorem~3.44 and Remark~3.45 in \cite{AFP})
    \[
    \int_Q |u-\bar u_Q|\le C_P \ell(Q) |Du|(Q)
    \]
    on cubes of side length $r_0:=(2\|u\|_{L^1(\R^n)})^\frac{1}{n}$. For every such cube $Q$ we have
    \[
    |\bar u_Q|\le \frac{1}{|Q|}\int_Q |u|\le \frac12.
    \]
    It follows that 
    \begin{align*}
        \fint_Q |u-\bar u_Q|& =\frac{1}{|Q|}\int_Q |u-\bar u_Q|\ge \frac12\frac{1}{|Q|}\int |u|
        %=\frac{1}{|Q|}\left(\int_{Q\cap\{u=0\}}|\bar u_Q|+\int_{Q\cap\{u\ne 0\}} |u-\bar u_Q|\right)\\
        %&\ge \frac{1}{|Q|}\int_{Q\cap\{u\ne0\}} \frac12 |u|\\
        %&=\frac12 \frac{1}{|Q|}\int_Q |u|,
    \end{align*}
    where in the last step we have used the elementary property
    \[
    \begin{cases}
        m\in\R,\,|m|\le\frac12\\
        n\in \Z
    \end{cases}\implies |m-n|\ge \frac12|n|.
    \]
    Using again Poincar\'e's inequality we deduce that for every cube $Q$ of side length $r_0$
    \[
        \int_Q |u|\le 2\int_Q|u-\bar u_Q|\le 2C_P r_0 |Du|(Q).
    \]
    Summing this inequality over a partition of $\R^n$ in cubes of side length $r_0$ gives
    \[
    \|u\|_{L^1(\R^n)}\le 2 C_P r_0 |Du|(\R^n),
    \]
    which recalling our choice of $r_0$ yields the thesis with $C_I=(2C_P)^\frac{n}{n-1}$.
\end{proof}

\subsection{Density bounds}
The next lemmas show that, at points of interest for us, the lower density $\Theta_*^{n}(|u|,x_0)$ is positive and the upper density $\Theta^{*,n}(|u|,x_0)$ is finite. They also apply to $BV$ functions that attain real values, and not only integer values.

\begin{lemma}[Upper density]\label{lemma:upper_densities_I}
    Assume that $u\in BV(\R^n)$ and that $x_0$ is a point such that $\supp(u)\subseteq x_0+H_\nu$ for some $\nu$. Then for every $r>0$
    \[
    \fint_{B_r(x_0)}|u(x)|dx\le C \frac{|Du|(B_r(x_0))}{r^{n-1}}
    \]
    for some dimensional constant $C$.
\end{lemma}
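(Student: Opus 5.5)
Translate and rescale so that $x_0=0$, $\nu=e_n$ and $r=1$. The statement is invariant under the rescaling $u_{x_0,r}(y)=u(x_0+ry)$: the left side $\fint_{B_r}|u|$ becomes $\fint_{B_1}|u_{x_0,r}|$, and $|Du|(B_r(x_0))/r^{n-1}$ becomes $|Du_{x_0,r}|(B_1)$. So it suffices to prove
\[
\int_{B_1}|u|\le C\,|Du|(B_1)
\]
for $u\in BV(\R^n)$ with $\supp u\subseteq H:=\{x_n\ge 0\}$. The idea is a Poincaré-type inequality in which the vanishing of $u$ on the lower half of the ball replaces the subtraction of the mean.

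First I would work in a domain adapted to this. Take the cylinder
\[
Q=\{|x'|<1/\sqrt2,\ |x_n|<1/\sqrt2\}\subset B_1 .
\]
This is a Lipschitz (convex) domain, and $u=0$ on its lower half $Q^-=Q\cap\{x_n<0\}$, which has measure comparable to $|Q|$. Apply the Poincaré inequality on $Q$ (Theorem~3.44 in \cite{AFP}):
\[
\int_Q|u-\bar u_Q|\le C_P|Du|(Q).
\]
On $Q^-$ we have $|u-\bar u_Q|=|\bar u_Q|$, so
\[
|\bar u_Q|\,|Q^-|\le C_P|Du|(Q),
\]
which bounds $|\bar u_Q|$ by $C|Du|(Q)$. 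Combining this with the Poincaré inequality gives
\[
\int_Q|u|\le \int_Q|u-\bar u_Q|+|Q|\,|\bar u_Q|\le C|Du|(Q)\le C|Du|(B_1).
\]

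Next I would extend the estimate from $Q$ to all of $B_1$. One option is to cover $B_1\cap H$ by finitely many cylinders of the same type, each centered at a point of $\partial H\cap B_1$, with fixed small size and lower half below $\{x_n=0\}$. The trouble is that such cylinders are not contained in $B_1$, so this only controls $\int_{B_1}|u|$ by $|Du|(B_{1+\delta})$, which is not what the lemma claims. The cleaner route is to run the same argument directly on $B_1$, which is itself a Lipschitz domain: $u$ vanishes on the half ball $B_1^-=B_1\cap\{x_n<0\}$ of measure $|B_1|/2$, and the Poincaré inequality on $B_1$ gives
\[
\int_{B_1}|u-\bar u_{B_1}|\le C_P|Du|(B_1).
\]
The argument from the previous step then applies verbatim, so the cylinder $Q$ is unnecessary.

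Finally, dividing by $|B_1|$ and undoing the scaling gives the claim with a dimensional constant. The only point needing care is the scaling check: $\int_{B_r}|u|=r^n\int_{B_1}|u_r|$ and $|Du|(B_r)=r^{n-1}|Du_r|(B_1)$. This is routine, so there is no real obstacle; the substantive input is just the Poincaré inequality on the ball.
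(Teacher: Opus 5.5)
Your proof is correct and is essentially the paper's argument: Poincaré's inequality on the ball, with the vanishing of $u$ on the complementary half-ball bounding the mean $|\bar u|$, followed by the triangle inequality. The rescaling to $r=1$ and the cylinder $Q$ are harmless but unnecessary, as you note yourself; the paper runs the same estimate directly on $B_r(x_0)$ using the scaled Poincaré inequality.
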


\begin{proof}
This follows directly from Poincar\'{e}'s inequality in the space of functions that vanish in a half-ball. To show it, set for simplicity $B_r=B_r(x_0)$, $N:=B_r\cap H_\nu$, $\bar u:=\fint_{B_r} u$. First we have that
\begin{equation}\label{eq:Poincare_N}
\frac12 | \bar u|=\frac{1}{2}\fint_{N}|u-\bar u|\le \fint_{B_r} |u-\bar u|\le C\frac{|Du|(B_r)}{r^{n-1}}
\end{equation}
by the classical Poincar\'{e}'s inequality. Then
\[
\fint_{B_r} |u|\le \fint_{B_r} |u-\bar u|+\fint_{B_r}|\bar u| =\fint_{B_r} |u-\bar u|+|\bar u|
\]
and the conclusion follows applying again Poincar\'e's inequality and \eqref{eq:Poincare_N}.
\end{proof}

\begin{lemma}[Positive lower density]\label{lemma:positive_lower_density}
    Let $u\in BV(\R^n)$. Let $x_0\in \supp Du$ satisfy \eqref{eq:nu_definition}. Then
    \begin{equation}\label{eq:lower_density_u}
    \liminf_{r\to 0}\fint_{B_r(x_0)} |u(x)|dx\ge c_1
    \end{equation}
    for some dimensional constant $c_1>0$.
    %In particular, $\Theta_*^{n-1}(|Du|,x_0)>0$.
\end{lemma}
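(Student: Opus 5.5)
We follow De Giorgi's classical argument for the lower density bound of finite perimeter sets: derive a differential inequality for $m(r):=\int_{B_r(x_0)}|u|\,dx$ and integrate it. The key ingredient is the global isoperimetric inequality of Proposition \ref{prop:global_isoperimetric}. That proposition is stated for $\Z$-valued functions, and I will assume $u$ is $\Z$-valued. This assumption seems unavoidable for a dimensional constant: for $u=\eps\1_E$ with $E$ a half-space, the averages are $\eps/2$, so the statement cannot hold for arbitrary real-valued $u$. Write $B_r:=B_r(x_0)$.

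First, $m$ is absolutely continuous, with $m'(r)=\int_{\partial B_r}|u|\,d\H^{n-1}$ for a.e. $r$. Moreover $m(r)>0$ for every $r>0$: if $u=0$ a.e. on $B_r$, then $|Du|(B_r)=0$, contradicting $x_0\in\supp Du$. Next, for a.e. $r$ I use the standard facts that $u\1_{B_r}\in BV(\R^n;\Z)$ with
\[
D(u\1_{B_r})=Du\restrict B_r-u\,n_{B_r}\H^{n-1}\restrict\partial B_r ,
\]
where $n_{B_r}$ is the outer normal and the trace on $\partial B_r$ coincides with $u$ itself for a.e. $r$. Since $D(u\1_{B_r})(\R^n)=0$, this gives $Du(B_r)=\int_{\partial B_r}u\,n_{B_r}\,d\H^{n-1}$. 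Hence
\[
|Du(B_r)|\le \int_{\partial B_r}|u|\,d\H^{n-1}=m'(r).
\]
By \eqref{eq:nu_definition}, $|Du(B_r)|\ge \tfrac12|Du|(B_r)$ for $r<r_0$, so $|Du|(B_r)\le 2m'(r)$ for a.e. $r<r_0$. This is the only place where approximate continuity of the polar enters.

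Now apply Proposition \ref{prop:global_isoperimetric} to $u\1_{B_r}$, using
\[
|D(u\1_{B_r})|(\R^n)\le |Du|(B_r)+m'(r)\le 3m'(r).
\]
This gives $m(r)\le C_I(3m'(r))^{n/(n-1)}$, i.e. $m'(r)\ge c\,m(r)^{(n-1)/n}$ for a.e. $r<r_0$. Since $m>0$ and $m$ is absolutely continuous and nondecreasing, $\frac{d}{dr}m^{1/n}=\frac1n m^{(1-n)/n}m'\ge c/n$ a.e. Integrating from $0$, where $m(0^+)=0$, yields $m(r)\ge (cr/n)^n$. Dividing by $|B_r|$ gives \eqref{eq:lower_density_u} with $c_1=(c/n)^n/\omega_n$, which depends only on $n$.

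The main technical point is the slicing/trace step: the formula for $D(u\1_{B_r})$ and the identification of the inner trace of $u$ on $\partial B_r$ with $u$ for a.e. $r$, which gives both $m'(r)=\int_{\partial B_r}|u|$ and the Gauss--Green identity. I would obtain both by testing $Du$ against radial cutoffs $\phi_\delta(|x-x_0|)$ approximating $\1_{B_r}$ and letting $\delta\to0$ at Lebesgue points of $r\mapsto\int_{\partial B_r}|u|$, as in the finite perimeter case in \cite{AFP}. The integrality of $u$ is used only through Proposition \ref{prop:global_isoperimetric}.
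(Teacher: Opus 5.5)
Your proposal is correct and follows essentially the same route as the paper: the differential inequality for $m(r)=\int_{B_r(x_0)}|u|$, and the bound $|D(u\1_{B_r})|(\R^n)\le |Du|(B_r)+m'(r)\le 3m'(r)$ coming from approximate continuity of the polar (the paper obtains it via $0=Du_r(\overline{B_r})=Du(B_r)+Du_r(\partial B_r)$ and $|Du_r|(\partial B_r)\le m'_+(r)$, where you use the Gauss--Green identity), followed by the global isoperimetric inequality and integration of $(m^{1/n})'$. Your restriction to $\Z$-valued $u$ is justified: the paper's proof also relies on Proposition~\ref{prop:global_isoperimetric}, and the lemma fails for real-valued $u$, so the paper's remark that these density lemmas hold for real-valued functions is inaccurate here. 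Your counterexample should use a bounded set such as a ball, with $x_0\in\partial B$, since $\eps\1_H$ for a half-space $H$ is not in $BV(\R^n)$.
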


\begin{proof}
    We follow closely the proof of \cite[Lemma~3.58]{AFP}. Set
    \[
    m(r):=\int_{B_r(x_0)}|u(x)|dx,
    \]
    and $u_r:=u\1_{B_r(x_0)}$. First, by \cite[Equation~(3.50)]{AFP}, for every $r>0$
    \[
    |Du_r|(\R^n)\le |Du|(\overline{B}_r(x_0))+m'_+(r),
    \]
    where $m'_+$ is the lower right derivative.
    Using the locality of the total variation we deduce
    \begin{align*}
        |Du|(B_r(x_0))+|Du_r|(\partial B_r(x_0))& =|Du_r|(B_r(x_0))+|Du_r|(\partial B_r(x_0))\\
        & =|Du_r|(\overline B_r(x_0))\\
        &= |Du_r|(\R^n)\\
        & \le |Du|(\overline{B}_r(x_0)) +m'_+(r).
    \end{align*}    
    In particular, for every $r>0$ such that $|Du|(\partial B_r(x_0))=0$, subtracting $|Du|(B_r(x_0))$ to both sides we get
    \[
    |Du_r|(\partial B_r(x_0))\le m'_+(r).
    \]
    Moreover, by \eqref{eq:nu_definition}, for every sufficiently small $r>0$ it holds $|Du|(B_r(x_0))\le 2 |Du(B_r(x_0)|$.
    Additionally, 
    \[
    0=Du_r(\overline{B_r(x_0)})=Du(B_r(x_0))+Du_r(\partial B_r(x_0)).
    \]
    Putting everything together we obtain
    \begin{align*}
    |Du_r|(\R^n)& =|Du_r|(B_r(x_0))+|Du_r|(\partial B_r(x_0))\\
    & = |Du|(B_r(x_0))+|Du_r|(\partial B_r(x_0))\\
    &\le 2|Du(B_r(x_0))|+|Du_r|(\partial B_r(x_0))\\
    &\le 2|Du_r(\partial B_r(x_0))|+|Du_r|(\partial B_r(x_0)) \\
    &\le 3m'_+(r).
    \end{align*}
    Then applying the global isoperimetric inequality given by Proposition~\ref{prop:global_isoperimetric} to $u_r$ we find that for $\Lcal^1$-a.e. $r\in (0,r_0)$
    \begin{align*}
        (m^{1/n})'(r)=\frac{1}{n}m^{(1-n)/n}(r)m'(r)\ge \frac{1}{3n} m^{(1-n)/n}(r)|Du_r|(\R^n)\ge \frac{1}{3nC}.
    \end{align*}
    Integrating in $r$ gives \eqref{eq:lower_density_u}. %The last assertion about $\Theta_*^{n-1}(|Du|,x_0)$ follows from Lemma \ref{lemma:upper_densities_I}.
\end{proof}

\subsection{Uniqueness of blowups}

We now come to a result on the uniqueness of the blowup for $\Z$-valued $BV$ functions at approximate continuity points of $Du$, under a finite density bound. The proof of this result might have some interest in itself, and may be reminiscent of some arguments used in the proof of Preiss' theorem (see \cite[Section~4]{delellis}). Observe that this step is unnecessary for sets of finite perimeter, as the only non-decreasing characteristic function on $\R$ that has a jump in 0 is the characteristic of the positive half-line, while for $\mathbb{Z}$-valued functions there could be multiple jumps. %It is somehow reminiscent of the proof of ....\red{Result interesting by itslef??}

We call \textit{blowup sequence} for $u$ at $x_0$ any sequence of the form $u_{x_0,r_j}$, for some $r_j\to 0$. We call \textit{blowup} of $u$ at $x_0$ any $L^1_\loc$-limit of a blowup sequence, and we denote the set of all such blowups by $\Tan(u,x_0)$. We will repeatedly use the following elementary facts:
\begin{enumerate}
    \item By Lemma \ref{lemma:upper_densities_I} and the compactness of $BV$ functions, if $\Theta^{*,n-1}(|Du|,x_0)<\infty$ and $\supp(u)\subseteq x_0+H_\nu$ for some $\nu$, then every blowup sequence $u_{x_0,r_j}$ has a subsequence converging in $L^1_\loc$.
    \item If $v\in \Tan(u,x_0)$ then for every $r>0$ the dilation $v_{0,r}$ belongs to $\Tan(u,x_0)$.
\end{enumerate}

\begin{proposition}[Uniqueness of blowups]\label{prop:uniqueness}
    Let $u\in BV(\R^n,\Z)$ such that $\supp u\subseteq x_0+H_\nu$, and let $x_0$ satisfy \eqref{eq:nu_definition}. Suppose that $\Theta^{*,n-1}(|Du|,x_0)<\infty$. Then the blowup at $x_0$ is unique.
\end{proposition}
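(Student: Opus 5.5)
The plan is to reduce uniqueness to a statement about each superlevel set separately. Write $E_k:=\{u\ge k\}$; since $u\ge 0$ is forced by the monotone structure below (and in any case one treats $\{u\le -k\}$ symmetrically), the coarea formula for integer-valued $BV$ functions gives $Du=\sum_k D\1_{E_k}$ and $|Du|=\sum_k|D\1_{E_k}|$.

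\textbf{Step 1: every blowup is a monotone function of $x\cdot\nu$.} Let $v\in\Tan(u,x_0)$ be the limit of $u_{x_0,r_j}$. Existence along subsequences follows from fact (1). For a.e.\ $R$, lower semicontinuity and weak convergence of $Du_{x_0,r_j}$ give
\[
|Dv|(B_R)\le\liminf_j|Du_{x_0,r_j}|(B_R)=\liminf_j Du_{x_0,r_j}(B_R)\cdot\nu=Dv(B_R)\cdot\nu ,
\]
where the middle equality uses \eqref{eq:nu_definition}. The same holds on every ball $B_\rho(y)\subset B_R$ by the usual De Giorgi argument. Hence $Dv=\nu|Dv|$, so $v(x)=g(x\cdot\nu)$ with $g:\R\to\Z$ nondecreasing and $g=0$ on $(-\infty,0)$, because $\supp u\subseteq x_0+H_\nu$.

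The density bound \eqref{eq:density_assumption} gives $|Dv|(B_R)\le CR^{n-1}$. Each jump of $g$ at height $t$ with $|t|<R/2$ contributes at least $cR^{n-1}$. Hence $g$ has finitely many jumps and is bounded by some $M$ independent of the blowup. Applying the same reasoning to each $\1_{E_k}$, any blowup of $\1_{E_k}$ along the same sequence is $\1_{\{x\cdot\nu>t_k\}}$ with $t_k\in[0,\infty]$, and $v=\sum_{k=1}^M\1_{\{x\cdot\nu>t_k\}}$. So uniqueness reduces to showing that each $t_k$ is either $0$ for every blowup sequence or $\infty$ for every blowup sequence.

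\textbf{Step 2: a dichotomy for each level set (the main obstacle).} Fix $\varepsilon>0$. For small $r$, \eqref{eq:nu_definition} gives $Du(B_r)\cdot\nu\ge(1-\varepsilon)|Du|(B_r)$. Since every summand satisfies $D\1_{E_k}(B_r)\cdot\nu\le|D\1_{E_k}|(B_r)$, the total defect is at most $\varepsilon|Du|(B_r)$, so for each $k$
\[
|D\1_{E_k}(B_r(x_0))|\ge|D\1_{E_k}|(B_r(x_0))-\varepsilon Cr^{n-1}.
\]
I would then rerun the proof of Lemma~\ref{lemma:positive_lower_density} for $m_k(r):=|E_k\cap B_r(x_0)|$, carrying this error term. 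Combining it with Proposition~\ref{prop:global_isoperimetric} should give a differential inequality of the form
\[
m_k'(r)\ge c\,m_k(r)^{(n-1)/n}-\varepsilon Cr^{n-1}\qquad\text{for a.e.\ small } r.
\]
A barrier/ODE comparison with $\varepsilon$ small should then yield the dichotomy: either $m_k(r)/r^n\to0$, or $m_k(r)/r^n\ge c_1>0$ for all sufficiently small $r$.

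The subtle point is that this inequality only helps when $m_k(r)/r^n$ is not already tiny. So I would argue by contradiction: if $m_k(\rho)/\rho^n\ge\eta$ at some small scale $\rho$, the differential inequality prevents $m_k(r)/r^n$ from decreasing below a fixed threshold at any scale $r<\rho$. Getting the constants so that $\eta$ can be arbitrary is where I expect the real work to be.

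\textbf{Step 3: conclusion.} In the first alternative of Step 2, every blowup has $t_k=\infty$. In the second, suppose some blowup along $r_j$ had $t_k\in(0,\infty)$. The scales $r_j t_k/2$ produce (fact (2), plus a diagonal argument) a blowup in which $E_k$ has zero density in $B_1$. This contradicts the uniform lower bound $m_k(r)/r^n\ge c_1$, so $t_k=0$ for every blowup.

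Hence every blowup equals $m\1_{H_\nu}$ with $m=\#\{k: E_k \text{ is in the second alternative}\}$. This number does not depend on the blowup sequence, which gives uniqueness. Finally, $m\neq0$ by Lemma~\ref{lemma:positive_lower_density}.
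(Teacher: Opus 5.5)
Your layer decomposition, the observation that the total defect $|Du|(B_r)-Du(B_r)\cdot\nu\le\eps|Du|(B_r)$ dominates the defect of each layer $E_k$, the inequality $m_k'\ge c\,m_k^{(n-1)/n}-\eps Cr^{n-1}$, the dichotomy you state, and Step 3 are all sound. The problem is the mechanism you propose for proving the dichotomy: it runs in the wrong direction. Set $\phi_k(r):=(m_k(r)/r^n)^{1/n}$. Your inequality then reads
\[
r\,\phi_k'(r)\ \ge\ \frac{c}{n}-\phi_k(r)-\frac{\eps C}{n\,\phi_k(r)^{n-1}} .
\]
For small $\eps$, the right-hand side is positive exactly on a band $(\phi_-(\eps),\phi_+(\eps))$, with $\phi_-(\eps)\to0$ and $\phi_+(\eps)\to c/n$. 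Inside the band $\phi_k$ increases with $r$, so as $r\downarrow0$ the density is pushed \emph{down}, and nothing stops it from dropping.

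A concrete example: take $u=\1_{\{0\le x_n\le1,\,|x'|\le1\}}+\1_{\{h<x_n\le1,\,|x'|\le1\}}$ with $x_0=0$ and $\nu=e_n$. The defect vanishes on every $B_r$ with $r<1$. The set $E_2$ fills nearly half of $B_\rho$ for $h\ll\rho<1$, yet $E_2\cap B_r=\emptyset$ for $r\le h$. So the claim ``$m_k(\rho)/\rho^n\ge\eta$ at one small scale keeps $m_k(r)/r^n$ above a threshold for all $r<\rho$'' is false for every $\eta$. No tuning of constants fixes it.

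What the inequality actually gives is propagation in the opposite direction.
\begin{enumerate}
\item As $r$ decreases, $\phi_k$ cannot cross a level inside the band upward.
\item $\phi_k$ cannot stay in $[\phi_-+\gamma,\phi_+-\gamma]$ over a range $[r,\rho]$ with $\log(\rho/r)$ large, because there $r\phi_k'\ge\delta(\gamma,\eps)>0$.
\end{enumerate}
Hence, if $\phi_k(\rho)<\phi_+(\eps)-\gamma$ at a single scale $\rho<r_\eps$, then $\phi_k\le\phi_-(\eps)+\gamma$ at all sufficiently small scales.

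The dichotomy then follows from a $\limsup$ argument. Suppose $\eta:=\limsup_{r\to0}\phi_k(r)>0$. Choose $\eps,\gamma$ with $\phi_-(\eps)+\gamma<\eta$; you then get $\phi_k\ge\phi_+(\eps)-\gamma\ge c/(2n)$ on all of $(0,r_\eps)$. Otherwise $\phi_k\to0$. This also settles your worry about ``$\eta$ arbitrary'': $\eps$ is chosen after $\eta$. With this replacement your proof closes.

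Compared with the paper: the paper never splits $u$ into level sets. It compares averages of $u$ on a chain of consecutively tangent balls $B_i=\tau^iB(\nu,\tilde\eps)$ and gets near-monotonicity $u_{i+1}\le u_i+\frac1{10}$ from compactness. It then uses the oscillation between two distinct blowups to extract a blowup whose averages on two adjacent balls are trapped in $[M-\frac34,M-\frac14]$, which integer-valuedness and monotonicity forbid. That is a purely qualitative compactness-and-contradiction argument.

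Your route is quantitative. It transfers approximate continuity of the polar to each superlevel set and reruns a perturbed version of the isoperimetric ODE of Lemma~\ref{lemma:positive_lower_density}. This reduces uniqueness to a density dichotomy for finite perimeter sets. It is closer to the classical De Giorgi argument and yields more: each $E_k$ individually has density at $x_0$ either tending to $0$ or bounded below. The cost is the ODE comparison above.
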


The strategy is to exploit the fact that every blowup at $x_0$ needs to be monodirectional and monotone, and to show that if two distinct blowups existed, then one could construct a third blowup violating monotonicity. We realize this by comparing averages of $u$ along a chain of consecutively tangent balls. These averages need to oscillate at different scales because of the existence of two distinct blowups, and this creates an issue because: either at many scales the oscillation from a ball to the next one goes in the ``wrong'' direction from the point of view of the monotonicity; or at many scales the averages on two consecutive tangent balls do not change much, hence at some point are bound to remain close to a non-integer value, in which case the integer-valuedness of $u$ still forces some oscillation in the wrong direction. This allows for the creation of a non-monotone blowup and thus a contradiction.

\begin{proof}
    From the approximate continuity assumption we deduce that every blowup $v$ is monodirectional and monotone, namely there exists a nondecreasing $h_v:\R\to\R$ (depending a priori on $v$) such that
    \[
    v(x)=h_v(x\cdot\nu).
    \]
    For a proof of this, see e.g. \cite[Theorem~3.59]{AFP}.     
    Since $u$ is $\Z$-valued, and since any $L^1$-convergent sequence has a subsequence that converges almost everywhere, we deduce that also $v$ and $h_v$ are $\Z$-valued. From the upper density assumption we also deduce that 
    \[
    M:=\max\{\|h_v\|_{L^\infty}:\, v\in \Tan(u,x_0)\}
    \]
    is finite. We can assume that $M>0$, otherwise every blowup is trivial and the conclusion holds.
    
    We want to exploit the monotonicity property above, showing that if the blowup were not unique then we could find an additional blowup which is not monotone.
    Suppose thus by contradiction that there exist two distinct blowups $v$ and $w$ at $x_0$. We can assume that $\|v\|_{L^\infty}=M$. In particular, up to a dilation of the domain (and possibly changing the sign of $u$, which does not change the argument), this implies that there exists a ball $B_\eps(\nu)$, $\eps>0$, such that
    \[
    v=M\text{ a.e. on $B_\eps(\nu)$}\qquad \text{and}\qquad w=m\text{ a.e. on $B_\eps(\nu)$},
    \]
    for some $m\in\mathbb{Z}$, $|m|<M$. This entails the existence of two infinitesimal sequences $r_j$ and $s_j$ such that
    \begin{equation}\label{eq:mM}
    \int_{B( \nu r_j, \eps r_j) } |u-M|\to 0\qquad\text{and}\qquad \int_{B( \nu s_j, \eps s_j) } |u-m|\to 0
    \end{equation}
    Let us now define a family of consecutively tangent balls (see Figure \ref{fig:balls})
    \[
    B_i:=\tau^i B(\nu,\tilde\eps),\qquad i\in \mathbb{N}, \,\tau:=\frac{1-\tilde\eps}{1+\tilde\eps},
    \]
    for some $\tilde\eps>0$, and let us define correspondingly
    $u_i:=\fint_{B_i} u$.
    If $\tilde\eps$ is chosen small enough, then every ball $B(\nu r_j,\eps r_j)$ appearing in \eqref{eq:mM} contains entirely one of the balls $B_i$ defined above, and the same for $B(\nu s_j,\eps s_j)$. Hence the same estimates \eqref{eq:mM} hold also for subsequences of $B_i$, i.e., we can find two interlaced sequences of natural numbers $(i_k), (j_k)$ such that
    $j_k<i_k<j_{k+1}<i_{k+1}$
    and for which 
    \begin{equation}\label{eq:mM_B_i}
        \int_{B_{i_k} } |u-M|\to 0\qquad\text{and}\qquad \int_{B_{j_k}} |u-m|\to 0.
    \end{equation}
    We claim that for $i$ large enough 
    \begin{equation}\label{eq:1/10}
    u_{i+1}\le u_i +\frac{1}{10}.
    \end{equation}
    Assume on the contrary that $u_{i+1}\ge u_i+\frac{1}{10}$ for infinitely many $i$'s, then we can take the blowup sequence corresponding to $B(x_0,r_i)$, $r_i:=\tau^i |\nu|$, pass to a strong $L^1$-limit $g$ up to subsequence, and obtain that for $g$
    \[
    \fint_{B_1} g\ge \fint_{B_0} g+\frac{1}{10},
    \]
    which contradicts the monotonicity of $g$ in direction $\nu$. Therefore \eqref{eq:1/10} is proven.

\begin{figure}
        \centering
    \def\svgwidth{0.4\columnwidth}
    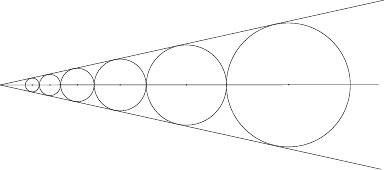

    \caption{\small The family of consecutively tangent balls $B_i$.}\label{fig:balls}
    \end{figure}
    
    Next observe that the oscillation between the two blowups forces the averages to repeatedly cross from values substantially below $M$ to values near $M$, while \eqref{eq:1/10} prevents this transition from occurring too abruptly. Hence there must be a lot of scales for which the average on two consecutive balls is close to $M-\tfrac12$. More precisely, we claim that we can find infinitely many $i$'s for which
    \begin{equation}\label{eq:controlled_average}
    M-\frac14 \ge u_{i+1}\ge u_i \ge M-\frac34.
    \end{equation}
    Observe first that from \eqref{eq:mM_B_i}, if $k$ is large enough we get 
    \[
    u_{i_k}\ge M-\frac14\qquad u_{j_k}\le m+\frac14\le M-\frac34.
    \]
    Up to relabeling the sequences, we will assume that the previous estimate holds for every $k\ge 1$. 
    We now look at the last scale before the averages cross the midpoint level $M-\frac12$; the near-monotonicity estimate \eqref{eq:1/10} then forces two consecutive averages to remain trapped in a narrow interval away from the integers.
    To this end, consider 
    \[
    \ell_k:=\max\left\{\ell\in \mathbb{N}, \, i<i_k:\, u_\ell\le M-\frac12\right\}\in \{j_k,\ldots,i_k-1\}.
    \]
    Then using \eqref{eq:1/10} and the definition of $\ell_k$
    \[
    u_{\ell_k}\le M-\frac12\le u_{\ell_k+1}\le u_{\ell_k}+\frac{1}{10}
    \]
    hence
    \[
    u_{\ell_k}\ge M-\frac12-\frac{1}{10}\ge M-\frac34.
    \]
    On the other hand,
    \[
    u_{\ell_k+1}\le u_{\ell_k}+\frac{1}{10}\le M-\frac12 +\frac{1}{10}\le M-\frac14.
    \]
    Hence the index $\ell_k$ satisfies \eqref{eq:controlled_average}, and the claim is proven.

    Now let us consider a blowup $g$ associated to the sequence $r_{\ell_k}:=\tau^{\ell_k}$. From \eqref{eq:controlled_average} we obtain that 
    \[
    M-\frac14 \ge g_{B_1}\ge g_{B_0}\ge M-\frac34.
    \]
    In this final step we exploit the integer-valuedness of $g$: on the one hand
    \begin{align*}
        M-\frac34 \le g_{B_1} &\le M\frac{|\{g=M\}\cap B_1|}{|B_1|}+(M-1)\frac{|\{g<M\}\cap B_1|}{|B_1|}\\
        &= M\frac{|\{g=M\}\cap B_1|}{|B_1|} +(M-1)\left(1-\frac{|\{g=M\}\cap B_1|}{|B_1|}\right)%\\
        %&=M-1+\frac{|\{v=M\}\cap B_1|}{|B_1|}
    \end{align*}
    hence rearranging terms
    \[
    \frac{|\{g=M\}\cap B_1|}{|B_1|}\ge \frac14.
    \]
    On the other hand
    \begin{align*}
        M-\frac14 \ge g_{B_0}&\ge M \frac{|\{g=M\}\cap B_0|}{|B_0|}-M \frac{|\{g<M\}\cap B_0|}{|B_0|}\\
        & = M \frac{|\{g=M\}\cap B_0|}{|B_0|}-M\left(1-\frac{|\{g=M\}\cap B_0|}{|B_0|}\right)%\\
        %&= 2M \frac{|\{v=M\}\cap B_0|}{|B_0|}-M
    \end{align*}
    hence rearranging terms
    \[
    \frac{|\{g=M\}\cap B_0|}{|B_0|}\le 1-\frac{1}{8M}.
    \]
    This shows that $g$ attains values lower than $M$ on a significant portion of $B_0$, while it attains value $M$ on a significant portion of $B_1$. This again contradicts the monotonicity of $g$. In conclusion, we found a contradiction to the assumption that there exist two distinct blowups at $x_0$, thus uniqueness is proved.
\end{proof}

We can finally put together all the results above to give a proof of the main theorem of this section.

\subsection{Proof of Theorem \ref{thm:blowup}}
    By Proposition \ref{prop:uniqueness} we deduce that the blowup of $u$ at $x_0$ is unique, and thus of the form $v(x)=m\1_{H_\nu}$, for some $m\in\mathbb{Z}$. Indeed, by uniqueness it must be $0$-homogeneous (as every dilation $v_{0,r}$ must coincide with $v$) and by monodirectionality and monotonicity the only option is $v(x)=m\1_{H_\nu}$. By Lemma \ref{lemma:positive_lower_density} we deduce also that $m\neq 0$, so (i) follows. By uniqueness of the blowup we know that $Du_{x_0,r}\weaksto Dv$ as $r\to 0$, and hence by the approximate continuity property \eqref{eq:nu_definition} and \cite[Theorem~2.44]{AFP} it also holds
    \[
    |Du_{x_0,r}|\weaksto |Dv|=|m|\H^{n-1}\restrict \partial H_\nu\qquad\text{as $r\to 0$}.
    \]
    This shows (ii). Finally, (iii) follows from (ii) and standard continuity results under weak* convergence of measures, observing that the limit measure does not charge the boundary of balls.\hfill \qedsymbol

\bibliographystyle{alpha}
\bibliography{biblio.bib}

\end{document}

%% file: cylindrical_projection.pdf_tex
%% Creator: Inkscape 1.2.2 (b0a8486541, 2022-12-01), www.inkscape.org
%% PDF/EPS/PS + LaTeX output extension by Johan Engelen, 2010
%% Accompanies image file 'cylindrical_projection.pdf' (pdf, eps, ps)
%%
%% To include the image in your LaTeX document, write
%%   \input{<filename>.pdf_tex}
%%  instead of
%%   \includegraphics{<filename>.pdf}
%% To scale the image, write
%%   \def\svgwidth{<desired width>}
%%   \input{<filename>.pdf_tex}
%%  instead of
%%   \includegraphics[width=<desired width>]{<filename>.pdf}
%%
%% Images with a different path to the parent latex file can
%% be accessed with the `import' package (which may need to be
%% installed) using
%%   \usepackage{import}
%% in the preamble, and then including the image with
%%   \import{<path to file>}{<filename>.pdf_tex}
%% Alternatively, one can specify
%%   \graphicspath{{<path to file>/}}
%% 
%% For more information, please see info/svg-inkscape on CTAN:
%%   http://tug.ctan.org/tex-archive/info/svg-inkscape
%%
\begingroup%
  \makeatletter%
  \providecommand\color[2][]{%
    \errmessage{(Inkscape) Color is used for the text in Inkscape, but the package 'color.sty' is not loaded}%
    \renewcommand\color[2][]{}%
  }%
  \providecommand\transparent[1]{%
    \errmessage{(Inkscape) Transparency is used (non-zero) for the text in Inkscape, but the package 'transparent.sty' is not loaded}%
    \renewcommand\transparent[1]{}%
  }%
  \providecommand\rotatebox[2]{#2}%
  \newcommand*\fsize{\dimexpr\f@size pt\relax}%
  \newcommand*\lineheight[1]{\fontsize{\fsize}{#1\fsize}\selectfont}%
  \ifx\svgwidth\undefined%
    \setlength{\unitlength}{439.71545602bp}%
    \ifx\svgscale\undefined%
      \relax%
    \else%
      \setlength{\unitlength}{\unitlength * \real{\svgscale}}%
    \fi%
  \else%
    \setlength{\unitlength}{\svgwidth}%
  \fi%
  \global\let\svgwidth\undefined%
  \global\let\svgscale\undefined%
  \makeatother%
  \begin{picture}(1,0.73691861)%
    \lineheight{1}%
    \setlength\tabcolsep{0pt}%
    \put(0,0){\includegraphics[width=\unitlength,page=1]{cylindrical_projection.pdf}}%
    \put(0.2030058,0.36420224){\makebox(0,0)[lt]{\lineheight{1.25}\smash{\begin{tabular}[t]{l}$x_0$\end{tabular}}}}%
    \put(0.19816513,0.46011728){\makebox(0,0)[lt]{\lineheight{1.25}\smash{\begin{tabular}[t]{l}$\tau_0$\end{tabular}}}}%
    \put(0,0){\includegraphics[width=\unitlength,page=2]{cylindrical_projection.pdf}}%
    \put(0.78417405,0.35859865){\makebox(0,0)[lt]{\lineheight{1.25}\smash{\begin{tabular}[t]{l}$0$\end{tabular}}}}%
    \put(0.77434921,0.45451364){\makebox(0,0)[lt]{\lineheight{1.25}\smash{\begin{tabular}[t]{l}$e_1$\end{tabular}}}}%
    \put(0,0){\includegraphics[width=\unitlength,page=3]{cylindrical_projection.pdf}}%
  \end{picture}%
\endgroup%

%% file: Slanted_projection.pdf_tex
%% Creator: Inkscape 1.2.2 (b0a8486541, 2022-12-01), www.inkscape.org
%% PDF/EPS/PS + LaTeX output extension by Johan Engelen, 2010
%% Accompanies image file 'Slanted_projection.pdf' (pdf, eps, ps)
%%
%% To include the image in your LaTeX document, write
%%   \input{<filename>.pdf_tex}
%%  instead of
%%   \includegraphics{<filename>.pdf}
%% To scale the image, write
%%   \def\svgwidth{<desired width>}
%%   \input{<filename>.pdf_tex}
%%  instead of
%%   \includegraphics[width=<desired width>]{<filename>.pdf}
%%
%% Images with a different path to the parent latex file can
%% be accessed with the `import' package (which may need to be
%% installed) using
%%   \usepackage{import}
%% in the preamble, and then including the image with
%%   \import{<path to file>}{<filename>.pdf_tex}
%% Alternatively, one can specify
%%   \graphicspath{{<path to file>/}}
%% 
%% For more information, please see info/svg-inkscape on CTAN:
%%   http://tug.ctan.org/tex-archive/info/svg-inkscape
%%
\begingroup%
  \makeatletter%
  \providecommand\color[2][]{%
    \errmessage{(Inkscape) Color is used for the text in Inkscape, but the package 'color.sty' is not loaded}%
    \renewcommand\color[2][]{}%
  }%
  \providecommand\transparent[1]{%
    \errmessage{(Inkscape) Transparency is used (non-zero) for the text in Inkscape, but the package 'transparent.sty' is not loaded}%
    \renewcommand\transparent[1]{}%
  }%
  \providecommand\rotatebox[2]{#2}%
  \newcommand*\fsize{\dimexpr\f@size pt\relax}%
  \newcommand*\lineheight[1]{\fontsize{\fsize}{#1\fsize}\selectfont}%
  \ifx\svgwidth\undefined%
    \setlength{\unitlength}{464.61869367bp}%
    \ifx\svgscale\undefined%
      \relax%
    \else%
      \setlength{\unitlength}{\unitlength * \real{\svgscale}}%
    \fi%
  \else%
    \setlength{\unitlength}{\svgwidth}%
  \fi%
  \global\let\svgwidth\undefined%
  \global\let\svgscale\undefined%
  \makeatother%
  \begin{picture}(1,0.54936478)%
    \lineheight{1}%
    \setlength\tabcolsep{0pt}%
    \put(0,0){\includegraphics[width=\unitlength,page=1]{Slanted_projection.pdf}}%
    \put(0.90403556,0.01778659){\makebox(0,0)[lt]{\lineheight{1.25}\smash{\begin{tabular}[t]{l}$v$\end{tabular}}}}%
    \put(0.47490703,0.22676339){\makebox(0,0)[lt]{\lineheight{1.25}\smash{\begin{tabular}[t]{l}$T$\end{tabular}}}}%
    \put(0.83257178,0.27103983){\makebox(0,0)[lt]{\lineheight{1.25}\smash{\begin{tabular}[t]{l}$(\mathcal{T}_v)_*T$\end{tabular}}}}%
    \put(0.06984354,0.005441){\makebox(0,0)[lt]{\lineheight{1.25}\smash{\begin{tabular}[t]{l}$T\times\curr{0,1}$\end{tabular}}}}%
  \end{picture}%
\endgroup%

%% file: balls.pdf_tex
%% Creator: Inkscape 1.2.2 (b0a8486541, 2022-12-01), www.inkscape.org
%% PDF/EPS/PS + LaTeX output extension by Johan Engelen, 2010
%% Accompanies image file 'balls.pdf' (pdf, eps, ps)
%%
%% To include the image in your LaTeX document, write
%%   \input{<filename>.pdf_tex}
%%  instead of
%%   \includegraphics{<filename>.pdf}
%% To scale the image, write
%%   \def\svgwidth{<desired width>}
%%   \input{<filename>.pdf_tex}
%%  instead of
%%   \includegraphics[width=<desired width>]{<filename>.pdf}
%%
%% Images with a different path to the parent latex file can
%% be accessed with the `import' package (which may need to be
%% installed) using
%%   \usepackage{import}
%% in the preamble, and then including the image with
%%   \import{<path to file>}{<filename>.pdf_tex}
%% Alternatively, one can specify
%%   \graphicspath{{<path to file>/}}
%% 
%% For more information, please see info/svg-inkscape on CTAN:
%%   http://tug.ctan.org/tex-archive/info/svg-inkscape
%%
\begingroup%
  \makeatletter%
  \providecommand\color[2][]{%
    \errmessage{(Inkscape) Color is used for the text in Inkscape, but the package 'color.sty' is not loaded}%
    \renewcommand\color[2][]{}%
  }%
  \providecommand\transparent[1]{%
    \errmessage{(Inkscape) Transparency is used (non-zero) for the text in Inkscape, but the package 'transparent.sty' is not loaded}%
    \renewcommand\transparent[1]{}%
  }%
  \providecommand\rotatebox[2]{#2}%
  \newcommand*\fsize{\dimexpr\f@size pt\relax}%
  \newcommand*\lineheight[1]{\fontsize{\fsize}{#1\fsize}\selectfont}%
  \ifx\svgwidth\undefined%
    \setlength{\unitlength}{184.22437292bp}%
    \ifx\svgscale\undefined%
      \relax%
    \else%
      \setlength{\unitlength}{\unitlength * \real{\svgscale}}%
    \fi%
  \else%
    \setlength{\unitlength}{\svgwidth}%
  \fi%
  \global\let\svgwidth\undefined%
  \global\let\svgscale\undefined%
  \makeatother%
  \begin{picture}(1,0.44162568)%
    \lineheight{1}%
    \setlength\tabcolsep{0pt}%
    \put(0,0){\includegraphics[width=\unitlength,page=1]{balls.pdf}}%
  \end{picture}%
\endgroup%